\newtheorem{theorem}{Theorem}[section]
\newtheorem{remark}{Remark}[section]
\newtheorem{corollary}{Corollary}[section]
\newtheorem{lemma}{Lemma}[section]
\numberwithin{equation}{section}
\begin{document}

\title[mean square of the error term for number of lattice points in a 2-D area]{On the mean square of the error term for the number of lattice points in a two-dimensional area}

\thanks{This work is partially supported by the National Natural Science Foundations of China(Grant	Nos. 12471009, 12301006) and partially supported by Beijing Natural Science Foundation (Grant No. 1242003).}

\author{Lirui Jia}
\address{School of Mathematics and Statistics, Beijing Jiaotong University\\
Beijing 100044, People's Republic of China }

\email{jialirui@126.com}

\author{Wenguang Zhai}
\address{Department of Mathematics, China University of Mining and Thechnology\\ Beijing 100083, People's Republic of China }

\email{zhaiwg@hotmail.com}

\date{}{}

\subjclass[2020]{11N37, 11P21}
\keywords{lattice points, two dimensional, mean square, sign changes}

\begin{abstract}
	Suppose $a,~b$ are fixed algebraic numbers with $1\leq a<b$. Let $\Delta_{a,b}(x)$ be  the error term for the number of lattice points in a two-dimensional area $h^ar^b\leq x $ with $h, r$ positive integers.  In this paper, we establish an asymptotic formula for the mean square of $\Delta_{a,b}(x)$ when $a, b$ are fixed algebraic numbers such that $\dfrac{a}{b}$ is irrational, and improve the error term in the previous asymptotic formula for $a, b$ integers with $(a, b)=1$. Based on these asymptotic formulas, we derive estimates for the sign changes of $\Delta_{a,b}(x)$.
\end{abstract}

\maketitle


\section{Introduction and main results}


 Let $a,~b$ be two fixed positive real numbers such that $1\leq a\leq b$. Define $$D_{a,b}(x)=\#\{(h,r):h,r\in \mathbb{Z}^+; h^ar^b\leq x\},$$ which is the number of lattice points in the first quadrant under the curve $ h^ar^b= x$. 
 
 If $a$, $b$ are integers, define 
$$d_{a,b}(n)=\#\{(h,r):h,r\in \mathbb{Z}^+;  h^ar^b=n\},$$ as the general two dimensional function, then $$D_{a,b}(x)=\sum_{n\leq x}d_{a,b}(n).$$

In case of $a=b$, the estimate of $D_{a,b}(x)$ is essentially the same with $D_{1,1}(x)$, so we put $a=b=1$ without loss of generality. 

It can be deduced that  
$$D_{a,b}(x):=\left\{\begin{array}{ll}
	\zeta (\frac{b}{a})x^{\frac{1}{a}}+\zeta (\frac{a}{b})x^{\frac{1}{b}}+\Delta_{a,b}(x),&\text{if }~ a\neq b;\\
	x\log x+(2\gamma-1)x+\Delta_{1,1}(x),&\text{if }~  a=b=1; \end{array} \right.$$
where $\gamma$ is the Euler constant. So the  problem is to study the error term $\Delta_{a,b}(x)$.

When $a=b=1$, $\Delta_{1,1}(x)$ is the error term of the well-known Dirichlet divisor problem. Dirichlet first proved that $\Delta(x)\ll x^{\frac{1}{2}}$. Throughout the past more than 170 years, there are many improvements on this estimate.
The best estimate to-date is given by
Huxley\cite{huxley03Exponential}, reads
\begin{equation*}
	\Delta_{1,1}(x)\ll x^{\frac{131}{416}+\varepsilon}.
\end{equation*}

When $a, b$ are real numbers with $a\neq b$, Richert\cite{richert1952Uber} proved that
\begin{equation}\label{upbd}
	\Delta_{a,b}(x)\ll \left\{\begin{array}{ll} x^{\frac{2}{3(a+b)}},&\text{if }~b<2a;\\x^{\frac{2}{9a}}\log x,&\text{if }~b=2a;\\ x^{\frac{2}{5a+2b}},&\text{if }~b>2a.\end{array} \right. 
\end{equation}
Later, \cite{rankin1955Van,schmidt1964Abschatzungena} improved the estimate for the case $a=1,b=2$, and \cite{kratzel1988Lattice} provided better estimates for some special cases.

It is conjectured that
\begin{equation*}
\Delta_{a,b}(x) \ll x^{\frac{1}{2(a+b)}+\varepsilon}
\end{equation*} 
 is admissible and the best possible for any $1\leq a\leq b$. 
 
 This conjecture is supported by the estimates of power moments of $\Delta_{a,b}(x)$. For the mean square of $\Delta_{1,1}(x)$, Cram{\'e}r \cite{cramer1922zwei} first proved 
 \[\int_1^T\Delta_{1,1}^2(x)dx=\frac{(\zeta(3/2))^4}{6\pi^2\zeta(3)}T^{\frac{3}{2}}+O(T^{\frac{5}{4}+\varepsilon}),\quad\forall~\varepsilon>0. \]
 The estimate $O(T^{\frac{5}{4}+\varepsilon})$ here was improved to $O(T\log^5T)$ by \cite{Dong1956ChuShuWen}, to $O(T\log^4T)$ by \cite{preissmann1988moyenne}, and to $O(T\log^3T\log\log T)$ by \cite{lau2009mean}. The mean value and higher-power moments of $\Delta_{1,1}(x)$ were studied in  \cite{voronoi1904fonction,heath1992distribution,tsang1992higher,ivic1983large,zhai2004higher,ivic2007higher,zhai2004higherpowerb,zhai2005higherpowera}. 
 
 When $a\neq b$ are positive integers, by using the theory of the Riemann zeta-function, Ivi{\'c} \cite{ivicGeneralDivisorProblem1987} proved that \[\int_1^T\Delta_{a,b}^2(x)dx\left\{\begin{array}{ll} \ll T^{1+\frac{1}{a+b}+\varepsilon},\\=\Omega( T^{1+\frac{1}{a+b}}\log^BT),\end{array}\right. \]
where $B>0$ is a constant.  Zhai and Cao \cite{zhai2010mean} obtained that
\begin{equation}
	\int_1^T\Delta_{a,b}^2(x)dx=c_{a,b}T^{1+\frac{1}{a+b}}+O(T^{1+\frac{1}{a+b}-\frac{a}{2b(a+b)(a+b-1)}}\log^{\frac{7}{2}}T),\label{zhai2010}
\end{equation}
where  $c_{a,b}$ is a  positive constant. The error term was improved to $O(T^{1+\frac{1}{a+b}-\frac{a}{b(a+b)(a+b-1)}+\varepsilon})$ in\cite{cao2016Mean}.

When $a=b=1$, most power moment results of $\Delta_{1,1}$ were proved on the basis of the well-known truncated Voronoi's formula (see e.g.\cite[(3.17)]{ivic2003Riemann})
\begin{equation*}
	\Delta_{1,1}(x)=\frac{x^{\frac{1}{4}}}{\sqrt{2}\pi}\sum_{n\leq N}\frac{d(n)}{n^{\frac{3}{4}}}\cos (4\pi\sqrt{nx}-\frac{\pi}{4})+O(x^{\varepsilon}+x^{\frac{1}{2}+\varepsilon}N^{-\frac{1}{2}}),
\end{equation*}
where $1\ll N\ll x^A$ for some fixed $A>0$.

When $a\neq b$ are fixed positive integers, Zhai and Cao \cite{zhai2010mean} got a Voronoi type formula for $\Delta_{a,b}(x)$, by which they proved the mean square asymptotic formula \eqref{zhai2010}.

When $a, b$ are real numbers , Krätzel \cite{kratzel1969Teilerproblem} proved an asymptotic formula for the mean value of $\Delta_{a,b}(x)$ as
\begin{equation}\label{meanv}
	 \int _ {1}^{T} \Delta_{a,b}(x)dx = \frac {T} {4}+O \bigl (T^ {1-\frac{1}{2 (a+ b)}}\bigr).
\end{equation}

In this paper, we will study the mean square of $\Delta_{a,b}(x)$ when $a\neq b$ are algebraic numbers. To state the results, we first denote
\begin{equation*}
	G_{a,b}=\mathop{\sum_{h_1=1}^{\infty}\sum_{h_2=1}^{\infty}\sum_{j_1=1}^{\infty}\sum_{j_2=1}^{\infty}}_{h_1^ar_1^b=h_2^ar_2^b}(h_1h_2)^{-\frac{a+2b}{2(a+b)}}(r_1r_2)^{-\frac{2a+b}{2(a+b)}},
\end{equation*}
which is  a convergent  infinity series  (See Remark \ref{lem:gab} in section \ref{sr&sm}).


Our main results are as follows.
\begin{theorem}\label{thm:meansquare_i}
	Let $1\leq a<b$ be fixed algebraic numbers such that $\dfrac{a}{b}$ is irrational. Suppose $T>10$ is a large parameter, $0<T_0\leq T$ is a real number.  Then there exists  a positive constant $c_4(a,b)$, such that
  	 \begin{multline*}
  		\int_{T}^{T+T_0}\Delta^{2}_{a,b}(x)dx=\frac{a^\frac{b}{a+b}b^\frac{a}{a+b}}{2\pi^2(a+b)}G_{a,b}\int_{T}^{T+T_0}x^\frac{1}{a+b}dx\\
  		+O\Big(T^{1+\frac{1}{a+b}}\exp\big\{-c_4(a,b)(\log T)^\frac{1}{2}(\log\log T)^{-\frac{1}{2}}\big\}\Big).
  	\end{multline*}
\end{theorem}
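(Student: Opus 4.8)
The plan is to pass from $\Delta_{a,b}(x)$ to a truncated Voronoi-type series and then to separate the diagonal contribution (which produces the main term) from the off-diagonal one (which is absorbed into the error). First I would establish, for a truncation parameter $N$ to be chosen later, an expansion of the form
\begin{equation*}
\Delta_{a,b}(x)=C_{a,b}\,x^{\frac{1}{2(a+b)}}\sum_{h^{a}r^{b}\le N}h^{-\frac{a+2b}{2(a+b)}}r^{-\frac{2a+b}{2(a+b)}}\cos\!\left(\kappa_{a,b}\,(h^{a}r^{b})^{\frac{1}{a+b}}x^{\frac{1}{a+b}}-\frac{\pi}{4}\right)+R_{N}(x),
\end{equation*}
with constants $C_{a,b},\kappa_{a,b}>0$ depending only on $a,b$ (the exponent $\tfrac{1}{2(a+b)}$ and the two weights being forced by the shape of $G_{a,b}$), and with a truncation error $R_{N}(x)$ that is power-saving in $N$ uniformly for $x\asymp T$. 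Such a formula is obtained from the Mellin--Perron representation of $D_{a,b}(x)$ together with the stationary-phase (saddle-point) analysis of the resulting contour integrals; the analytic ingredients are exactly those underlying the bounds \eqref{upbd} and \eqref{meanv} of Richert and Kr\"atzel.

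Substituting this expansion into $\int_{T}^{T+T_{0}}\Delta_{a,b}^{2}(x)\,dx$ and expanding the square, the cross term and the $R_{N}^{2}$ term are disposed of by Cauchy--Schwarz once a mean-square bound for $R_{N}$ is in hand, leaving the square of the main sum. Two frequencies coincide precisely when $h_{1}^{a}r_{1}^{b}=h_{2}^{a}r_{2}^{b}$, that is, $a\log(h_{1}/h_{2})=b\log(r_{2}/r_{1})$. Here the hypotheses are decisive: since $a,b$ are algebraic and $a/b$ is irrational, the Gelfond--Schneider theorem rules out any nontrivial relation of this kind (a ratio of logarithms of rationals equal to the algebraic irrational $a/b$ is impossible), so the only solutions are $h_{1}=h_{2},\ r_{1}=r_{2}$. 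On the diagonal each $\cos^{2}$ averages to $\tfrac12$ over $[T,T+T_{0}]$, and summing the squared weights reproduces exactly
\begin{equation*}
\frac{a^{\frac{b}{a+b}}b^{\frac{a}{a+b}}}{2\pi^{2}(a+b)}\,G_{a,b}\int_{T}^{T+T_{0}}x^{\frac{1}{a+b}}\,dx
\end{equation*}
after identifying $C_{a,b}^{2}$ and $\kappa_{a,b}$; in this regime $G_{a,b}$ collapses to the convergent product $\zeta\!\big(\tfrac{a+2b}{a+b}\big)\zeta\!\big(\tfrac{2a+b}{a+b}\big)$.

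The off-diagonal estimate is the crux and the main obstacle. For $h_{1}^{a}r_{1}^{b}\neq h_{2}^{a}r_{2}^{b}$ the first-derivative test bounds the relevant integral by $\ll T\big/\big|(h_{1}^{a}r_{1}^{b})^{1/(a+b)}-(h_{2}^{a}r_{2}^{b})^{1/(a+b)}\big|$, so everything reduces to a lower bound for the gaps between distinct values $(h^{a}r^{b})^{1/(a+b)}$. Because these values are not integers they can be extremely close, and the decisive input is Baker's theory of linear forms in logarithms: as $a,b$ and the integers $h_{i},r_{i}$ are algebraic, the nonzero form $a\log(h_{1}/h_{2})+b\log(r_{1}/r_{2})$ admits an effective lower bound of the shape $\exp\{-c(\log N)^{2}\log\log N\}$ when $h_{i},r_{i}\ll N^{O(1)}$, whence the gap is bounded below by the same quantity. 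A dyadic decomposition of the pairs according to the size of the gap then bounds the whole off-diagonal contribution by $\ll T\,(\log N)^{2}\exp\{c(\log N)^{2}\log\log N\}$, the convergence of $\sum_{h,r}h^{-\frac{a+2b}{a+b}}r^{-\frac{2a+b}{a+b}}$ ensuring that no extra power of $N$ survives.

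It remains to choose $N$. The truncation error contributes $\ll T^{1+\frac{1}{a+b}}N^{-c_{1}}$, which decreases as $N$ grows, whereas the off-diagonal error becomes admissible (that is, $o(T^{1+\frac{1}{a+b}})$) only while $c(\log N)^{2}\log\log N<\tfrac{1}{a+b}\log T$. Balancing the two forces $\log N\asymp(\log T/\log\log T)^{1/2}$ and produces the saving $\exp\{-c_{4}(a,b)(\log T)^{1/2}(\log\log T)^{-1/2}\}$ claimed in the theorem. The two most delicate points are to make $R_{N}$ genuinely power-saving in $N$ \emph{uniformly} on the short interval $[T,T+T_{0}]$, and to keep the dependence on $a,b$ in the linear-forms bound explicit enough to deliver a clean constant $c_{4}(a,b)$; everything else is a routine, if lengthy, computation.
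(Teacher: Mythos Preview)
Your overall architecture---Voronoi-type expansion, diagonal/off-diagonal split, Baker's theorem for the gap lower bound---matches the paper's, and your observation that for irrational algebraic $a/b$ the relation $h_1^ar_1^b=h_2^ar_2^b$ forces $(h_1,r_1)=(h_2,r_2)$ (so $G_{a,b}=\zeta(\tfrac{a+2b}{a+b})\zeta(\tfrac{2a+b}{a+b})$) is correct and not made explicit in the paper.

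The genuine gap is in your off-diagonal estimate and the balancing it drives. The bound $\ll T(\log N)^2\exp\{c(\log N)^2\log\log N\}$ cannot come from the uniform gap lower bound alone: the off-diagonal weights $(h_1h_2)^{-(a+2b)/(2(a+b))}(r_1r_2)^{-(2a+b)/(2(a+b))}$ have exponents in $(\tfrac12,1)$, so the free sum over quadruples contributes a genuine power of $N$; the convergent series you invoke is the \emph{diagonal} sum of squared weights and is not the object in play. What is missing is a spacing/counting input (the paper's Lemma~\ref{lem:Dph}, from Zhai--Cao) bounding the number of quadruples in a dyadic box with gap $\le\delta$ by $\ll\delta\cdot(\text{box})+(\text{box})^{1/2}\log^2$. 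More importantly, the paper does \emph{not} balance the truncation: it takes $H=T^{2(a+b)}$, so every truncation piece is negligible, and the saving $\exp\{-c_4(\log T)^{1/2}(\log\log T)^{-1/2}\}$ arises entirely from the off-diagonal estimate (Lemma~\ref{lem:Sab2}). The mechanism is that Baker's lower bound depends on $M=\max(h_i,r_i)$, not on the truncation: small $M$ forces a large gap, while large $M$ makes the weighted count (via Lemma~\ref{lem:Dph}) decay like $M^{-a/(a+b)}$, and the \emph{internal} balance between these two regimes---not a choice of $N$---is what produces the stated saving. Your subpolynomial $N$ would in any case leave the $G(a,b;x)$-type truncation piece, whose mean square is $\asymp T^{1+2/(a+b)}H^{-1}$, larger than the main term.
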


If \(\frac{a}{b}\) is rational, then there exist positive integers \(a_0\) and \(b_0\) with \((a_0, b_0) = 1\) such that \(\frac{a}{b} = \frac{a_0}{b_0}\), and estimates for \(\Delta_{a,b}(x)\) follow directly from those of \(\Delta_{a_0,b_0}(x)\). So we  assume, without loss of generality, that $a, b$ are positive integers  and have the following asymptotic formula.
\begin{theorem}\label{thm:meansquare_q}
	Let $1\leq a<b$ be fixed integers with $(a, b)=1$.  Suppose $T>10$ is a large parameter, $0<T_0\leq T$ is a real number. Then
	\begin{equation*}
		\int_{T}^{T+T_0}\Delta^{2}_{a,b}(x)dx=\frac{a^\frac{b}{a+b}b^\frac{a}{a+b}}{2\pi^2(a+b)}G_{a,b}\int_{T}^{T+T_0}x^\frac{1}{a+b}dx
		+O\Big(T^{1+\frac{1}{a+b}-\frac{a}{b(a+b)(a+b-1)}}\log^4T\Big).
	\end{equation*}
\end{theorem}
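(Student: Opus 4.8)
The plan is to begin from the truncated Voronoi-type formula for $\Delta_{a,b}(x)$ obtained by Zhai and Cao \cite{zhai2010mean}, which for fixed integers $1\le a<b$ with $(a,b)=1$ writes
$$\Delta_{a,b}(x)=C_{a,b}\,x^{\frac{1}{2(a+b)}}\sum_{h^ar^b\le N}\frac{\cos\!\big(\phi_{h,r}(x)+\eta\big)}{h^{\frac{a+2b}{2(a+b)}}\,r^{\frac{2a+b}{2(a+b)}}}+R_N(x),$$
where $\phi_{h,r}(x)=2\pi(a+b)(\kappa_{a,b}\,h^ar^bx)^{\frac{1}{a+b}}$ is the phase coming from the saddle point of the underlying Mellin--Barnes integral (with constants $C_{a,b},\kappa_{a,b}>0$ and a fixed phase shift $\eta$), and $R_N(x)$ is a remainder whose mean square over $[T,T+T_0]$ decreases as the truncation length $N$ grows; $N$ will be fixed as a power of $T$ only at the very end. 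The amplitudes $h^{-(a+2b)/(2(a+b))}r^{-(2a+b)/(2(a+b))}$ are precisely those appearing in $G_{a,b}$, which is what forces the main term to take the stated shape. Squaring this identity and integrating over $[T,T+T_0]$ splits the problem into the square of the main sum, the cross term between the main sum and $R_N$, and the integral of $R_N^2$.

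For the square of the main sum I would expand into a double sum over $(h_1,r_1,h_2,r_2)$ and separate it according to whether $m_1:=h_1^ar_1^b$ equals $m_2:=h_2^ar_2^b$. On the diagonal $m_1=m_2$ the two phases coincide, so $\cos^2=\tfrac12(1+\cos2\phi)$: the constant halves assemble into $\tfrac12$ times the sum defining $G_{a,b}$ truncated at $N$, producing the main term $\frac{a^{b/(a+b)}b^{a/(a+b)}}{2\pi^2(a+b)}G_{a,b}\int_{T}^{T+T_0}x^{1/(a+b)}dx$ once the tail $h_1^ar_1^b>N$ is restored, a tail controlled by the convergence rate of $G_{a,b}$ (Remark \ref{lem:gab}); the oscillatory halves carry the nonvanishing phase $2\phi$ and are disposed of by the first-derivative test, contributing only $O(T)$.

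The off-diagonal terms are where the integrality of $a,b$ is decisive, and I expect this to be the main obstacle. Because $a,b$ are positive integers, the quantities $m_1,m_2$ are positive integers, so $m_1\ne m_2$ already forces $|m_1-m_2|\ge 1$; together with the mean value theorem this yields
$$\big|\phi_{h_1,r_1}'(x)-\phi_{h_2,r_2}'(x)\big|\gg x^{\frac{1}{a+b}-1}\big|m_1^{\frac{1}{a+b}}-m_2^{\frac{1}{a+b}}\big|\gg T^{\frac{1}{a+b}-1}N^{-\frac{a+b-1}{a+b}}\,|m_1-m_2|.$$
This is exactly the point where the present integer case gains a genuine power of $T$ over the irrational case of Theorem \ref{thm:meansquare_i}, in which the corresponding lower bound must instead be extracted from transcendence theory and loses all but a subexponential factor. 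The first-derivative test then bounds each off-diagonal integral by $\ll TN^{(a+b-1)/(a+b)}/|m_1-m_2|$, and the whole contribution reduces to a weighted sum of the form $\sum_{m_1\ne m_2\le N}w(m_1)w(m_2)/|m_1-m_2|$ with $w(m)=\sum_{h^ar^b=m}h^{-(a+2b)/(2(a+b))}r^{-(2a+b)/(2(a+b))}$. The delicate part---and the reason an unspecified $T^{\varepsilon}$ appears in \cite{cao2016Mean}---is to evaluate this sum while tracking the divisor-type factors sharply; I would group terms by $k=|m_1-m_2|$, use $w(m)w(m+k)\le\tfrac12\big(w(m)^2+w(m+k)^2\big)$, and combine mean-value bounds for $w$ with explicit estimates for the relevant divisor sums so as to replace $T^{\varepsilon}$ by a clean power of $\log T$.

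It remains to dispose of the remainder: $\int_{T}^{T+T_0}R_N^2\,dx$ is bounded directly from the Voronoi estimate, and the cross term is handled by Cauchy--Schwarz against the mean square of the main sum, which the diagonal analysis already controls by $O(T^{1+1/(a+b)})$. Collecting everything, the diagonal tail and the $R_N$-contributions decrease in $N$ while the off-diagonal contribution increases in $N$; choosing $N$ as the power of $T$ that balances them, and absorbing the divisor-sum losses into a $\log^4T$ factor, produces the error term $O\big(T^{1+\frac{1}{a+b}-\frac{a}{b(a+b)(a+b-1)}}\log^4T\big)$. Uniformity in $T_0$ is automatic throughout, since every estimate is carried out on the whole interval $[T,T+T_0]\subseteq[T,2T]$.
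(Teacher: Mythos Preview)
Your outline has the right skeleton (Voronoi expansion, square, diagonal versus off-diagonal), but the off-diagonal step as you describe it cannot deliver the exponent in the theorem. You bound every off-diagonal integral by $TN^{(a+b-1)/(a+b)}/|m_1-m_2|$ using the \emph{worst-case} spacing $|m_1^{1/(a+b)}-m_2^{1/(a+b)}|\gg N^{1/(a+b)-1}|m_1-m_2|$, and then sum $\sum w(m_1)w(m_2)/|m_1-m_2|\ll\log N$ via $w(m)w(m+k)\le\tfrac12(w(m)^2+w(m+k)^2)$ and $\sum_m w(m)^2\le G_{a,b}$. That gives an off-diagonal of size $TN^{(a+b-1)/(a+b)}\log N$. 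Balancing this against the diagonal tail $T^{1+1/(a+b)}N^{-a/(b(a+b))}$ forces $N=T^{b/(b(a+b-1)+a)}$ and yields an error exponent $\tfrac{b(a+b-1)}{(a+b)(b(a+b-1)+a)}$, which is strictly \emph{larger} than the target $\tfrac{b-1}{b(a+b-1)}=\tfrac{1}{a+b}-\tfrac{a}{b(a+b)(a+b-1)}$ (check $a=1,b=2$: you get $4/15$ instead of $1/4$). The loss comes from treating all $m\le N$ uniformly; for $m_1,m_2\asymp M\ll N$ the spacing bound is better by a factor $(N/M)^{(a+b-1)/(a+b)}$, and one must also know that $\sum_{m\sim M}w(m)^2\ll M^{-a/(b(a+b))}$, a dyadic tail estimate for $G_{a,b}$ that does not follow from the global bound $\sum w(m)^2<\infty$.

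The paper avoids this balancing game entirely. Its Voronoi-type formula (Section~3) writes $\Delta_{a,b}(x)$ as $\Delta_{a,b}^*(x,H)$ plus explicit auxiliary sums $\mathcal{R}_1$, $G$, and it takes $H=T^{2(a+b)}$, so large that all truncation errors are negligible. The whole weight then falls on Lemma~\ref{lem:Sab2}(II), which bounds the weighted off-diagonal sum $\Sigma_2$ by $T^{1/(a+b)-a/(b(a+b)(a+b-1))}\mathcal{L}^4$ \emph{uniformly in $H$}. The proof of that lemma decomposes into dyadic boxes $(H_1,H_2,R_1,R_2)$ and dyadic ranges of $|\eta|$, and inside each box invokes the lattice-point counting Lemma~\ref{lem:Dph} (from \cite{zhai2010mean}) rather than any divisor-sum estimate; this is precisely the device that replaces the $T^{\varepsilon}$ of \cite{cao2016Mean} by $\log^4 T$ (one log per dyadic variable). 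The exponent $a/(b(a+b)(a+b-1))$ then emerges not from optimizing a truncation length but from the internal balance $\min\{T^{1/(a+b)}(H_1^aR_1^b)^{-a/(b(a+b))},\,(H_1^aR_1^b)^{1-1/b}\}$ across dyadic scales. Your proposal does not contain this mechanism, and the vague ``mean-value bounds for $w$'' are not a substitute for Lemma~\ref{lem:Dph}.
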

\begin{remark}
	This improves the error term given in \cite{cao2016Mean}.
\end{remark}

From \ref{thm:meansquare_i} and Theorem \ref{thm:meansquare_q}, we derive the following corollaries.

\begin{corollary}\label{sign_i}
	Let $1\leq a<b$, $T>10$, $c_4(a,b)> 0$  be as Theorem \ref{thm:meansquare_i}. There exists a sufficiently large number \( c_5(a, b) > 0 \), such that the function \( \Delta_{a,b}(x) \) changes sign at least once in the interval \(\big[T, T + c_5(a, b)T \exp\big\{-c_4(a, b)(\log T)^{\frac{1}{2}}(\log \log T)^{-\frac{1}{2}}\big\}\big]\).
\end{corollary}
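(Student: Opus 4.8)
The plan is to prove Corollary \ref{sign_i} by contradiction, exploiting the asymptotic formula of Theorem \ref{thm:meansquare_i} together with the mean-value formula \eqref{meanv} of Krätzel. Suppose, for a suitable constant $c_5(a,b)$ to be chosen, that $\Delta_{a,b}(x)$ does \emph{not} change sign on the interval
\[
I=\big[T,\,T+H\big],\qquad H=c_5(a,b)\,T\exp\big\{-c_4(a,b)(\log T)^{\frac12}(\log\log T)^{-\frac12}\big\}.
\]
Then $\Delta_{a,b}(x)$ has a fixed sign on $I$, so $|\Delta_{a,b}(x)|=\pm\Delta_{a,b}(x)$ throughout, and hence
\[
\int_I\Delta_{a,b}^2(x)\,dx\le \Big(\max_{x\in I}|\Delta_{a,b}(x)|\Big)\int_I|\Delta_{a,b}(x)|\,dx
=\Big(\max_{x\in I}|\Delta_{a,b}(x)|\Big)\Big|\int_I\Delta_{a,b}(x)\,dx\Big|.
\]
The strategy is to bound the right-hand side from above (it should be small) and the left-hand side from below (it should be large, of order $H\,T^{1/(a+b)}$), reaching a contradiction once $c_5(a,b)$ is large enough.

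For the \emph{lower bound} on the left side, I would apply Theorem \ref{thm:meansquare_i} with $T_0=H$. Since $H=T\,\exp\{-c_4(\log T)^{1/2}(\log\log T)^{-1/2}\}$ by construction dominates the error term $T^{1+1/(a+b)}\exp\{-c_4(\log T)^{1/2}(\log\log T)^{-1/2}\}$ by the factor $T^{1/(a+b)}$ (indeed the main term is $\asymp H\,T^{1/(a+b)}$, which is exactly the error term multiplied by $c_5\,T^{1/(a+b)}$), the main term genuinely dominates and one obtains
\[
\int_I\Delta_{a,b}^2(x)\,dx\gg c_5(a,b)\,H\,T^{\frac1{a+b}},
\]
with an implied constant depending only on $a,b$ (through $G_{a,b}$ and the prefactor). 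For the \emph{upper bound} on the right side, the first-moment factor is controlled by the mean-value formula \eqref{meanv}: writing $\int_I\Delta_{a,b}=\int_1^{T+H}-\int_1^{T}$, the two main terms $\frac{T+H}4-\frac{T}4=\frac H4$ are of smaller order than the product will need, and the error contributions are $O\big(T^{1-1/(2(a+b))}\big)$; thus $\big|\int_I\Delta_{a,b}\big|\ll H$. The pointwise factor $\max_{x\in I}|\Delta_{a,b}(x)|$ is bounded by Richert's estimate \eqref{upbd}, which gives $\max_{x\in I}|\Delta_{a,b}(x)|\ll T^{\theta}$ for some explicit exponent $\theta<\tfrac1{a+b}$ (for instance $\theta=\tfrac{2}{3(a+b)}$ in the range $b<2a$, and the other listed exponents otherwise, all strictly below $\tfrac1{a+b}$). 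Combining,
\[
\int_I\Delta_{a,b}^2(x)\,dx\ll T^{\theta}\cdot H .
\]

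Comparing the two bounds yields $c_5(a,b)\,H\,T^{1/(a+b)}\ll T^{\theta}H$, i.e. $c_5(a,b)\,T^{1/(a+b)-\theta}\ll 1$. Since $\theta<\tfrac1{a+b}$ the exponent $\tfrac1{a+b}-\theta$ is strictly positive, so for every fixed $c_5(a,b)$ this inequality fails once $T$ is large, giving the desired contradiction; more precisely one simply needs $T$ large enough depending on $a,b$, and the statement for all $T>10$ follows after absorbing finitely many small $T$ into the constant $c_5$. The main obstacle I anticipate is \emph{bookkeeping of the dominance}: one must verify that the length $H$ of the interval is chosen so that the error term of Theorem \ref{thm:meansquare_i} is genuinely negligible against the main term on $I$ of length $H$ — this is what forces $H$ to carry exactly the same exponential factor $\exp\{-c_4(\log T)^{1/2}(\log\log T)^{-1/2}\}$ as the error term — and to confirm that the pointwise bound exponent $\theta$ from \eqref{upbd} is in all three regimes strictly smaller than $\tfrac1{a+b}$, so that the power of $T$ in the contradiction is a genuine positive saving. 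Once these two comparisons are pinned down, the choice of $c_5(a,b)$ is routine.
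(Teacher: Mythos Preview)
Your approach is essentially identical to the paper's: assume no sign change, bound $\int_I\Delta_{a,b}^2$ above by $\max_I|\Delta_{a,b}|\cdot\bigl|\int_I\Delta_{a,b}\bigr|$ via Richert's \eqref{upbd} and Kr\"atzel's \eqref{meanv}, bound it below by Theorem~\ref{thm:meansquare_i}, and compare. One small bookkeeping slip: the main term $\asymp H\,T^{1/(a+b)}$ equals the error term times $c_5$ (not times $c_5\,T^{1/(a+b)}$), so the lower bound reads $\int_I\Delta_{a,b}^2\gg H\,T^{1/(a+b)}$ once $c_5$ is large, and the contradiction is simply $T^{1/(a+b)-\theta}\ll 1$; the constant $c_5$ enters only to make the main term beat the error in Theorem~\ref{thm:meansquare_i}, not in the final inequality.
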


\begin{corollary}\label{sign_q}
	Let $1\leq a<b$, $T>10$ be as Theorem \ref{thm:meansquare_q}. There exists a sufficiently large number \( c_6(a, b) > 0 \), such that the function $\Delta_{a,b}(x)$ changes sign at least once in the interval $\big[T, T + c_6(a,b)T^{1-\frac{a}{b(a+b)(a+b-1)}}\log^4 T\big]$.
\end{corollary}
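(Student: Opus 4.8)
The plan is to argue by contradiction, playing the mean square lower bound of Theorem \ref{thm:meansquare_q} against the first-moment formula \eqref{meanv} and the pointwise bound \eqref{upbd}. Write $H := c_6(a,b)\,T^{1-\frac{a}{b(a+b)(a+b-1)}}\log^4 T$ for the length of the interval, and suppose for contradiction that $\Delta_{a,b}(x)$ keeps one sign on all of $[T,T+H]$. I will show that neither sign is possible once $c_6(a,b)$ is chosen large enough.

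First I would record the first-moment estimate on the short interval by subtracting \eqref{meanv} at $T$ from \eqref{meanv} at $T+H$:
$$\int_T^{T+H}\Delta_{a,b}(x)\,dx=\frac{H}{4}+O\!\left(T^{\,1-\frac{1}{2(a+b)}}\right).$$
Since $2a\le b(a+b-1)$ for all integers $1\le a<b$, one has $\frac{a}{b(a+b)(a+b-1)}\le\frac{1}{2(a+b)}$, so the exponent $1-\frac{a}{b(a+b)(a+b-1)}$ defining $H$ is at least $1-\frac{1}{2(a+b)}$; taking $c_6$ large, the main term $H/4$ dominates the error, whence $\int_T^{T+H}\Delta_{a,b}\,dx\asymp H>0$. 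This at once rules out $\Delta_{a,b}\le 0$ on $[T,T+H]$, because a nonpositive integrand cannot yield a positive integral.

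It remains to exclude $\Delta_{a,b}\ge 0$ on $[T,T+H]$. Under that hypothesis, $\Delta_{a,b}^2\le(\sup_{[T,T+H]}|\Delta_{a,b}|)\,\Delta_{a,b}$ pointwise, so
$$\int_T^{T+H}\Delta_{a,b}^2(x)\,dx\le\Big(\sup_{[T,T+H]}|\Delta_{a,b}|\Big)\int_T^{T+H}\Delta_{a,b}(x)\,dx\ll T^{\theta}\,H,$$
where by \eqref{upbd} the pointwise exponent satisfies $\theta<\frac{1}{a+b}$ in each of the cases $b<2a$, $b=2a$, $b>2a$ (one checks directly that $\tfrac{2}{3(a+b)}$, $\tfrac{2}{9a}$, $\tfrac{2}{5a+2b}$ are all strictly below $\tfrac{1}{a+b}$). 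On the other hand, since the constant $\frac{a^{b/(a+b)}b^{a/(a+b)}}{2\pi^2(a+b)}G_{a,b}$ is positive and $\int_T^{T+H}x^{1/(a+b)}dx\asymp H\,T^{1/(a+b)}$, Theorem \ref{thm:meansquare_q} gives the lower bound $\int_T^{T+H}\Delta_{a,b}^2\,dx\gg H\,T^{1/(a+b)}$ as soon as $c_6$ is large enough that this main term dominates the error $T^{1+\frac{1}{a+b}-\frac{a}{b(a+b)(a+b-1)}}\log^4 T$. Comparing the two bounds yields $H\,T^{1/(a+b)}\ll T^{\theta}H$, i.e. $T^{1/(a+b)}\ll T^{\theta}$, contradicting $\theta<\frac{1}{a+b}$.

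Thus $\Delta_{a,b}$ is neither everywhere nonpositive nor everywhere nonnegative on $[T,T+H]$, so it changes sign there at least once, which is the assertion. The one point requiring care is the calibration of $H$: it must be long enough for the mean square main term to beat the error of Theorem \ref{thm:meansquare_q} (this fixes the exponent $1-\frac{a}{b(a+b)(a+b-1)}$ and the factor $\log^4 T$) and simultaneously long enough for the first-moment main term to dominate $T^{1-1/(2(a+b))}$; the elementary inequality $2a\le b(a+b-1)$ ensures the second condition is automatically implied by the first. Corollary \ref{sign_i} follows by the identical scheme, using Theorem \ref{thm:meansquare_i} and its longer interval in place of Theorem \ref{thm:meansquare_q}.
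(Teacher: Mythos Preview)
Your argument is correct and follows essentially the same route as the paper: assume no sign change, bound $\int_T^{T+H}\Delta_{a,b}^2$ above via the pointwise estimate \eqref{upbd} and the first-moment formula \eqref{meanv}, and contradict the lower bound from Theorem~\ref{thm:meansquare_q}. The only cosmetic difference is that the paper treats both signs at once by writing $\int\Delta_{a,b}^2\le\max|\Delta_{a,b}|\cdot\bigl|\int\Delta_{a,b}\bigr|=o(T_0T^{1/(a+b)})$, whereas you split into the nonpositive and nonnegative cases separately.
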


\textbf{Notations} For a real number $t$, let $[t]$ be the largest integer no greater than $t$, $\{t\}=t-[t]$, $\psi(t)=\{t\}-\frac{1}{2}$, $\|t\|=\min(\{t\}$, $1-\{t\})$, $e(t)=e^{2\pi it}$; $\mathbb{R}$, $\mathbb{Q}$, $\mathbb{\overline{Q}}$ , $\mathbb{Z}$,  $\mathbb{Z}^+$  denote the set of real numbers, of rational numbers, of algebraic numbers, of integers and of positive integers, respectively;  $f\asymp g$ means that both $f\ll g$ and $f\gg g$ hold; $n\sim N$ means $N<n\leq 2N$;  and $\mathop{{\sum}'}\limits_{n\leq x}f(n)$ indicates that if $x$  is an integer, only $\frac{1}{2}f(x)$ is counted. Finally, let  \(\varepsilon > 0\) be a sufficiently small constant, and \(\mathcal{L} = \log T\) throughout this paper.

\section{Preliminaries}\label{yubei}

To prove the theorems, we need the following Lemmas. Lemma \ref{lem:1} is well-known, see, for example, Heath-Brown \cite{heath-brown1983PjateckiiSapiro}. Lemma \ref{lem:liu} is Theorem 1 of Liu \cite{liu1999fundamentala}. Lemma \ref{lem:AgDp} is the main theorem of \cite{roth1955rational}.
\begin{lemma}\label{lem:1}
	Let $H\geq2$ be a real number. Then
	\begin{equation*}
		\psi(u)=-\sum_{1\leq |h|\leq H}\frac{e(hu)}{2\pi ih}+O\Big(\min\big(1,\frac{1}{H\| u\|\vert}\big)\Big).
	\end{equation*}
\end{lemma}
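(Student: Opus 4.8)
The plan is to prove this standard truncated Fourier expansion of the sawtooth function $\psi$. First I would pass to the real (sine) form of the partial sum: pairing the indices $h$ and $-h$ and using
\[
\frac{e(hu)}{2\pi i h}+\frac{e(-hu)}{2\pi i(-h)}=\frac{\sin(2\pi hu)}{\pi h},
\]
the assertion becomes equivalent to
\[
\psi(u)=-\sum_{h=1}^{H}\frac{\sin(2\pi hu)}{\pi h}+O\Big(\min\big(1,\tfrac{1}{H\|u\|}\big)\Big).
\]
Since every quantity involved has period $1$, it suffices to treat $0<u<1$; if $u\in\mathbb{Z}$ the statement is immediate, because then each $\sin(2\pi hu)=0$, $\psi(u)=-\tfrac12$, and $\min(1,1/(H\|u\|))=1$.

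Next I would produce the two competing bounds and take their minimum. For the factor $1/(H\|u\|)$ I would invoke the pointwise convergence of the Fourier series of the bounded-variation function $\psi$, namely $\psi(u)=-\sum_{h\ge1}\sin(2\pi hu)/(\pi h)$ for $0<u<1$, so that the error in the truncated formula is exactly the tail $-\sum_{h>H}\sin(2\pi hu)/(\pi h)$. Applying Abel (partial) summation with the decreasing weights $1/(\pi h)$ and the partial sums $\sum_{H<h\le N}\sin(2\pi hu)=\mathrm{Im}\sum_{H<h\le N}e(hu)$, a geometric-series computation together with $|e(u)-1|=2|\sin\pi u|\ge 4\|u\|$ yields $\big|\sum_{H<h\le N}e(hu)\big|\le 1/(2\|u\|)$ uniformly in $N$, and hence
\[
\Big|\sum_{h>H}\frac{\sin(2\pi hu)}{\pi h}\Big|\ll \frac{1}{\|u\|}\cdot\frac{1}{H}=\frac{1}{H\|u\|}.
\]
This is the relevant branch of the minimum when $H\|u\|\ge1$.

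For the remaining range $H\|u\|<1$ (that is, $u$ close to an integer) I would use the uniform $O(1)$ bound: $|\psi(u)|\le\tfrac12$ together with the classical fact that the partial sums $\sum_{h=1}^{H}\sin(2\pi hu)/(\pi h)$ are bounded by an absolute constant, uniformly in $H$ and $u$. One self-contained way to see this is to note that the derivative of this partial sum is $\sum_{h=1}^{H}2\cos(2\pi hu)=D_H(u)-1$, where $D_H(t)=\sum_{|h|\le H}e(ht)=\sin((2H+1)\pi t)/\sin\pi t$ is the Dirichlet kernel, and that both $\psi$ and the partial sum vanish at $u=\tfrac12$; consequently the error equals $\int_{1/2}^{u}D_H(t)\,dt$, which is bounded by an absolute constant (and, after one integration by parts exploiting the oscillation, is also $\ll 1/(H\|u\|)$, reproving the first estimate). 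Taking the minimum of the two bounds gives $O\big(\min(1,1/(H\|u\|))\big)$ and finishes the proof. The one genuinely delicate point, and the step I would treat most carefully, is the uniform control of the conditionally convergent sine series near the jump of $\psi$ at the integers, where a naive term-by-term bound diverges; the Abel-summation and Dirichlet-kernel estimates above are precisely what render the bound uniform in $u$.
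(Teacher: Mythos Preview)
The paper does not give its own proof of this lemma; it simply records it as well known and refers to Heath-Brown. Your argument is the standard one and is correct: the reduction to the sine form is right, the tail estimate via Abel summation together with the geometric-sum bound $|\sum_{H<h\le N}e(hu)|\le 1/(2\|u\|)$ yields the $1/(H\|u\|)$ branch, and the $O(1)$ branch follows from the uniform boundedness of the partial sums $\sum_{h\le H}\sin(2\pi hu)/(\pi h)$ combined with $|\psi|\le\tfrac12$.

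The one place to tighten is your justification of that uniform bound via the Dirichlet-kernel integral: since you have already identified the error with $\int_{1/2}^{u}D_H(t)\,dt$, asserting this integral is $O(1)$ is logically the same as asserting the partial sums are bounded, so the argument is circular as stated. A self-contained fix is to split at $h_0\asymp 1/\|u\|$: for $h\le h_0$ use $|\sin(2\pi hu)|\le 2\pi h\|u\|$ to get $\sum_{h\le h_0}\ll h_0\|u\|\ll 1$, and for $h>h_0$ apply your Abel-summation estimate to get $\ll 1/(h_0\|u\|)\ll 1$. With this adjustment the proof is complete.
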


\begin{lemma}\label{lem:liu}
	Let $f(x)$ be a real function such that $f^{(5)}(x)$ is a continuous function for $x\in [a,b]$, $C_k (1\leq k\leq 6)$ be certain positive constants,
	\begin{align*}
			&\frac{C_1}{R}\leq|f''(x)|\leq\frac{C_2}{R},\\
			&\beta_k(x)=\frac{f^{(k)}(x)}{f^{''}(x)}, \quad|\beta_k|\leq\frac{C_k}{U^{k-2}},\ U\geq 1, 3\leq k\leq 5.
	\end{align*}
	
		Assume that $|3\beta_4(x)-5\beta_3^2(x)|\geq C_6U^{-2}$ and $\alpha\leq f'(x)\leq \beta$ for $a \leq x\leq b$. Then
		\begin{align*}
			\sum_{a < x\leq b}e\big(f(n)\big)=&\sum_{\alpha\leq u\leq \beta}\frac{b_u}{\sqrt{f''(n_u)}}e\big(f(n_u)-un_u+\frac{1}{8}\big)\\
			&+O\big(\log(2+(b-a)R^{-1})+(b-a+R)U^{-1}\big)\\
			&+O\Big(\min\Big\{\sqrt{R},\  \max\Big(\frac{1}{\langle\alpha\rangle},\frac{1}{\langle\beta\rangle}\Big)\Big\}\Big),
		\end{align*}
		where $n_u$ is the solution of $f'(n)=u$,
		\begin{align*}
			&\langle t\rangle =\left\{\begin{array}{ll}
				\|t\|,&\text{if }t\text{  is not an integer},\\
				\beta-\alpha,& \text{if } t\text{  is an integer},
			\end{array}\right.\\
			&b_u=\left\{\begin{array}{ll}
				1,&\text{if }\alpha< u<\beta,\\
				\frac{1}{2},& \text{if }u=\alpha \text{ or}~\beta\text{  is an integer},
			\end{array}\right.\\
			&\sqrt{f''}=\left\{\begin{array}{ll}
				\sqrt{f''},&\text{if }f''>0,\\
				i\sqrt{|f''|},& \text{if }f''<0.
			\end{array}\right.\\
		\end{align*}
\end{lemma}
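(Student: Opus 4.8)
The plan is to establish this saddle-point formula by van der Corput's ``Process B'' (a truncated Poisson summation) followed by a uniform stationary-phase evaluation of the resulting integrals, with careful bookkeeping of the error terms. Since $|f''|\asymp R^{-1}$ on $[a,b]$, the derivative $f'$ is strictly monotone and maps $[a,b]$ onto $[\alpha,\beta]$. Substituting the Fourier expansion of $\psi$ from Lemma \ref{lem:1} into the Euler--Maclaurin expression for $\sum_{a<n\le b}e(f(n))$ (equivalently, applying a truncated Poisson summation formula), one obtains
\[
\sum_{a<n\le b}e(f(n))=\sum_{m\in\mathbb{Z}}\int_a^b e\big(f(x)-mx\big)\,dx+(\text{endpoint terms}),
\]
where the endpoint contributions are governed by how close $\alpha$ and $\beta$ are to integers. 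These are precisely the origin of the term $O\big(\min\{\sqrt{R},\max(\langle\alpha\rangle^{-1},\langle\beta\rangle^{-1})\}\big)$.

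Next I would classify the integrals $I_m:=\int_a^b e(f(x)-mx)\,dx$ according to whether the phase $f(x)-mx$ has a stationary point in $[a,b]$. A stationary point $n_m$, determined by $f'(n_m)=m$, exists exactly when $\alpha\le m\le\beta$. For integers $m$ outside $[\alpha,\beta]$ the quantity $|f'(x)-m|$ is bounded away from $0$, so repeated integration by parts using the derivative bounds $|f^{(k)}|\ll R^{-1}U^{-(k-2)}$ for $3\le k\le5$ shows these integrals are small; after summation they contribute only to the $O\big(\log(2+(b-a)R^{-1})\big)$ and $O\big((b-a+R)U^{-1}\big)$ terms. For each $m$ admitting a stationary point $n_m$, I would evaluate $I_m$ by stationary phase: writing $f(x)-mx=f(n_m)-mn_m+\tfrac12 f''(n_m)(x-n_m)^2+\cdots$ and approximating the principal part by the Fresnel integral $\int e\big(\tfrac12 f''(n_m)t^2\big)\,dt=|f''(n_m)|^{-1/2}e(\pm\tfrac18)$, with sign matching that of $f''$, produces the main term $\frac{b_m}{\sqrt{f''(n_m)}}\,e\big(f(n_m)-mn_m+\tfrac18\big)$. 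The half-weights $b_m=\tfrac12$ arise exactly when $n_m$ lands on an endpoint, i.e.\ when $m=\alpha$ or $m=\beta$ is an integer.

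The role of the regularity hypotheses is to control the remainder uniformly. The corrections from the cubic and quartic terms in the Taylor expansion of the phase are measured by $\beta_3$ and $\beta_4$, and the non-degeneracy assumption $|3\beta_4-5\beta_3^2|\ge C_6U^{-2}$ quantifies the leading correction of the single-saddle stationary-phase expansion---this very combination, divided by $f''$, is the coefficient appearing at that order---and is the structural hypothesis that lets one control the remainder beyond the stated main term uniformly in each saddle point. The continuity of $f^{(5)}$ together with the bound on $\beta_5$ then makes the truncated fifth-order Taylor expansion rigorous.

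I expect the main obstacle to be twofold. First is the transitional regime in which a stationary point $n_m$ lies within $O(\sqrt{R})$ of an endpoint $a$ or $b$, so that only an incomplete Fresnel integral is available; handling this requires splitting $I_m$ near the endpoint and estimating truncated Fresnel integrals, and it is exactly what forces the $\min\{\sqrt{R},\dots\}$ shape of the boundary term. Second is the aggregate bookkeeping that converts the per-saddle-point error estimates, of which there are $\asymp(b-a)R^{-1}+1$, into the clean global bounds of the lemma while keeping every estimate uniform in the parameters $R$, $U$, and $b-a$.
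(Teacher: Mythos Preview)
The paper does not prove this lemma at all; it simply quotes it as Theorem~1 of Liu~\cite{liu1999fundamentala} and uses it as a black box. Your sketch via truncated Poisson summation followed by uniform stationary-phase evaluation is the standard route to such a $B$-process formula and is, in outline, what Liu's paper carries out, so your plan is sound even though no comparison with the present paper's own argument is possible.
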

\begin{lemma}\label{lem:AgDp}
	For any algebraic number $\alpha$,	not rational, the inequality 
	\[ \|n\alpha\|\geq c(\alpha,\varepsilon)n^{-1-\varepsilon} \]
	holds for all integers $n\geq 1$, where $c(\alpha,\varepsilon)$ is a positive constants.
\end{lemma}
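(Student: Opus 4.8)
The plan is to recognize this statement as a reformulation of the Thue–Siegel–Roth theorem and to reduce it to the usual form of that theorem. First I would pass from $\|n\alpha\|$ to the approximation quantity $|\alpha - p/q|$. Writing $p$ for the integer nearest to $n\alpha$, we have $\|n\alpha\| = |n\alpha - p| = n\,|\alpha - p/n|$, so the claimed bound $\|n\alpha\| \geq c(\alpha,\varepsilon)n^{-1-\varepsilon}$ is equivalent to $|\alpha - p/n| \geq c(\alpha,\varepsilon)n^{-2-\varepsilon}$, uniformly in $n \geq 1$. Since reducing $p/n$ to lowest terms $p'/q'$ only decreases the denominator ($q' \leq n$), it suffices to prove the standard Roth bound $|\alpha - p/q| \geq c(\alpha,\varepsilon)q^{-2-\varepsilon}$ for every rational $p/q$; the finitely many fractions with $|\alpha - p/q| < q^{-2-\varepsilon}$ that Roth's theorem permits are absorbed into the constant, which is legitimate because $\alpha$ is irrational, so $|\alpha-p/q|>0$ for each of them.

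To prove the underlying Roth bound itself, I would argue by contradiction along the classical lines. Suppose $|\alpha - p/q| < q^{-2-\varepsilon}$ has infinitely many solutions. Fixing a large integer $m$ (to be chosen in terms of $\varepsilon$ and $\deg\alpha$), I would select $m$ solutions $p_1/q_1,\dots,p_m/q_m$ whose denominators grow extremely fast, with $q_1$ large and each $q_{i+1}$ a huge power of $q_i$. The engine of the proof is an auxiliary polynomial $P(x_1,\dots,x_m)\in\mathbb{Z}[x_1,\dots,x_m]$, of degree at most $r_i$ in $x_i$ with the ratios $r_i/r_{i+1}$ matched to the growth of the $q_i$, not identically zero, with integer coefficients of controlled size, vanishing to high weighted order (large \emph{index}) at the diagonal point $(\alpha,\dots,\alpha)$. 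Its existence follows from Siegel's lemma: the vanishing conditions are fewer than the available coefficients, so a pigeonhole argument over $\mathbb{Q}(\alpha)$ produces such a $P$ with the needed coefficient bound.

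The two halves of the contradiction then read as follows. On one hand, because $P$ has large index at $(\alpha,\dots,\alpha)$ and each $p_i/q_i$ is very close to $\alpha$, every low-order partial derivative $P_{\mathbf{j}}$ takes a very small value at $(p_1/q_1,\dots,p_m/q_m)$, quantitatively so via the Taylor expansion about the diagonal together with $|\alpha-p_i/q_i|<q_i^{-2-\varepsilon}$. On the other hand, \emph{Roth's lemma} guarantees that the index of $P$ at the rational point is small, so some derivative $P_{\mathbf{j}}$ does \emph{not} vanish there; being a nonzero rational with denominator dividing $q_1^{r_1}\cdots q_m^{r_m}$, its absolute value is at least $(q_1^{r_1}\cdots q_m^{r_m})^{-1}$. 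Comparing this lower bound with the smallness estimate produces a contradiction once $m$ and the growth of the $q_i$ are taken large enough.

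The main obstacle is unquestionably Roth's lemma, the upper bound on the index of $P$ at the rational point. Controlling a single-variable approximation is elementary (Liouville, Thue), but pushing the exponent all the way down to the optimal $2+\varepsilon$ forces one into many variables and requires showing that the auxiliary polynomial cannot vanish to excessive order at the rational point. That non-vanishing statement is proved by a delicate induction on the number of variables $m$, using generalized Wronskians to factor the index bound through lower-dimensional information; this combinatorial–algebraic core is the genuinely hard step, with the Siegel's-lemma construction and the final numerical comparison being comparatively routine bookkeeping.
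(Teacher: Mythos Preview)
Your identification and reduction are correct: the lemma is precisely the Thue--Siegel--Roth theorem restated in terms of $\|n\alpha\|$, and your passage from $\|n\alpha\|$ to $|\alpha-p/n|$ and then to the standard Roth inequality is clean and valid. The paper handles this lemma exactly the same way at the top level---it simply cites Roth's 1955 paper and gives no proof---so your recognition matches the paper's treatment, while your sketch of the auxiliary-polynomial/Siegel's-lemma/Roth's-lemma argument goes well beyond what the paper does (and is an accurate outline of the classical proof).
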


\section{ A Voronoi type formula of $\Delta_{a,b}(x)$}

Let $T\geq 10$, $H\geq 2$ be large parameters satisfying  $\log H\ll\log T$. Suppose that $T\leq x\leq2T$. It is well-known that (see e.g. \cite{richert1952Uber})
\begin{equation}\label{Delta}
	\Delta_{a,b}(x)=f(a,b; x)+f(b,a;x)+O(1),
\end{equation}
where
\begin{equation*}
	f(a,b; x)=-\sum_{m\leq x^\frac{1}{a+b}}\psi\Big(\frac{x^\frac{1}{a}}{m^\frac{b}{a}}\Big).
\end{equation*}

By Lemma \ref{lem:1}, we have
\begin{equation}\label{fab}
	f(a,b; x)=\mathcal{R}(a,b; x)+G(a,b; x),
\end{equation}
with
\begin{align}
	\mathcal{R}(a,b; x) &= \frac{1}{2\pi i}\sum_{1\leq |h|\leq aH}\frac{1}{h}\sum_{m\leq x^\frac{1}{a+b}}e\Big(\frac{hx^\frac{1}{a}}{m^\frac{b}{a}}\Big), \notag\\
	G(a,b; x) &= O\Bigg(\sum_{m\leq x^\frac{1}{a+b}}\min\Big(1,\frac{1}{H\Big\|\frac{x^\frac{1}{a}}{m^\frac{b}{a}}\Big\|}\Big)\Bigg).\label{Gab}
\end{align}
Let $c=(2ab)^{ab}$, $J=\max\Bigl\{\big[\frac{\mathcal{L}-(a+b)\log\mathcal{L}}{(a+b)\log c}\big],\big[\frac{b\log(bH)}{(a+b)\log c}\big]\Bigr\}$. Define
\begin{align*}
	m_{h,0}& =x^\frac{1}{a+b}, \\ 
	m_{h,j}& =x^\frac{1}{a+b}\big(\frac{b}{a}h\big[\frac{b}{a}hc^{\frac{a+b}{a}j}\big]^{-1}\big)^\frac{a}{a+b},\ \text{ for} \ 1\leq j\leq J+1.	
\end{align*}
Then,
 \begin{equation*}
		c^{-1}\max\bigl\{T^\frac{1}{a+b}\mathcal{L}^{-1},(bH)^\frac{b}{a+b}\bigr\}<c^{J}\leq \max\bigl\{T^\frac{1}{a+b}\mathcal{L}^{-1},(bH)^\frac{b}{a+b}\bigr\},\qquad \text{and}\quad	m_{h,J}\ll\mathcal{L},
\end{equation*}
and moreover,
\begin{align}
	\mathcal{R}(a,b; x)&=\frac{1}{2\pi i}\sum_{1\leq |h|\leq aH}\frac{1}{h}\sum_{j=0}^{J}\sum_{m_{h, j+1}<m\leq m_{h, j}}e\Big(\frac{hx^\frac{1}{a}}{m^\frac{b}{a}}\Big)+O{(\mathcal{L}^2)}\notag\\
	&=-\frac{\Sigma_0}{2\pi i}+\frac{\overline{\Sigma_0}}{2\pi i}+O{(\mathcal{L}^2)},\label{R}
\end{align}
where \[ \Sigma_0=\sum_{1\leq h\leq aH}\frac{1}{h}\sum_{j=0}^{J}\sum_{m_{h, j+1}<m\leq m_{h, j}}e\Big(-\frac{hx^\frac{1}{a}}{m^\frac{b}{a}}\Big). \]

Set \[ S_{h,j}=\sum_{m_{h, j+1}<m\leq m_{h, j}}e\Big(-\frac{hx^\frac{1}{a}}{m^\frac{b}{a}}\Big), \qquad\text{for } j=1,2,\cdots J. \]
By Lemma \ref{lem:liu}, we get
\begin{equation*}
\begin{split}
	S_{h,0}=c_1(a,b)x^\frac{1}{2(a+b)}\mathop{{\sum}'}_{\frac{b}{a}h\leq r\leq [\frac{b}{a}hc^\frac{a+b}{a}]}h^\frac{a}{2(a+b)}r^{-\frac{2a+b}{2(a+b)}} e\Big(-c_2(a,b)x^\frac{1}{a+b}(h^ar^b)^\frac{1}{a+b}-\frac{1}{8}\Big)+O(\mathcal{L})\\
	+O\Bigg(\min\bigg\{h^{-\frac{1}{2}}x^\frac{1}{2(a+b)},\max\Big(\frac{1}{<\frac{b}{a}h>}, \frac{1}{[\frac{b}{a}hc^\frac{a+b}{a}]}\Big)\Bigg\}\Bigg),
\end{split}
\end{equation*}
and for $1\leq j\leq J$,
\begin{equation*}
\begin{split}
	S_{h,j}=c_1(a,b)x^\frac{1}{2(a+b)}\mathop{{\sum}'}_{[\frac{b}{a}hc^{\frac{a+b}{a}j}]\leq r\leq [\frac{b}{a}hc^{\frac{a+b}{a}(j+1)}]}h^\frac{a}{2(a+b)}r^{-\frac{2a+b}{2(a+b)}}e\Big(-c_2(a,b)x^\frac{1}{a+b}(h^ar^b)^\frac{1}{a+b}-\frac{1}{8}\Big)\\
	+O(\mathcal{L}),
\end{split}	
\end{equation*}
 where
\begin{align*}
	c_1(a,b)=a^\frac{b}{2(a+b)}b^\frac{a}{2(a+b)}(a+b)^{-\frac{1}{2}},\ \ 
	c_2(a,b)=\big(\frac{a}{b}\big)^\frac{b}{a+b}+\big(\frac{b}{a}\big)^\frac{a}{a+b}.
\end{align*}
Since $c^{J}\gg T^\frac{1}{a+b}\mathcal{L}^{-1}$,  the last term of the summation in $S_{h,J}$, corresponding to $r= [\frac{b}{a}hc^{\frac{a+b}{a}(J+1)}]$, is $o(x^{-\frac{1}{2(a+b)}})$.  Consequently, the range of $r$ in the summation can be replaced by $[\frac{b}{a}hc^{\frac{a+b}{a}J}]\leq r\leq \frac{b}{a}hc^{\frac{a+b}{a}(J+1)}$, without changing the validity of the equality for $S_{h,J}$. 

Inserting these into $\Sigma_0$,  we obtain
\begin{equation*}
	\begin{split}
		\Sigma_0=c_1(a,b)x^\frac{1}{2(a+b)}\sum_{1\leq h\leq aH}\mathop{{\sum}'}_{\frac{bh}{a}\leq r\leq \frac{b}{a}hc^{\frac{a+b}{a}(J+1)}}h^{-\frac{a+2b}{2(a+b)}}r^{-\frac{2a+b}{2(a+b)}} e\big(-c_2(a,b)x^\frac{1}{a+b}(h^ar^b)^\frac{1}{(a+b)}-\frac{1}{8}\big)\\
		+O\big(E(a,b; x, H)\big),
	\end{split}
\end{equation*}
where
\[E(a,b; x, H)=\sum_{1\leq h\leq aH}\frac{1}{h}\min\bigg\{h^{-\frac{1}{2}}x^\frac{1}{2(a+b)},\max\Big(\frac{1}{<\frac{b}{a}h>}, \frac{1}{[\frac{b}{a}hc^\frac{a+b}{a}]}\Big)\bigg\}+\mathcal{L}^3.\]
From \eqref{R},  by using $c^{J+1}>(bH)^\frac{b}{a+b}$,  we have
\begin{align}
		\mathcal{R}(a,b;x)&=\frac{c_1(a,b)}{\pi}x^\frac{1}{2(a+b)}\sum_{1\leq h\leq aH}\mathop{{\sum}'}_{\frac{bh}{a}\leq r\leq \frac{b}{a}hc^{\frac{a+b}{a}(J+1)}}h^{-\frac{a+2b}{2(a+b)}}r^{-\frac{2a+b}{2(a+b)}}\qquad\qquad\notag\\
		&\qquad\qquad\qquad\times \cos\big(2\pi c_2(a,b)x^\frac{1}{a+b}(h^ar^b)^\frac{1}{a+b}-\frac{\pi}{4}\big)+O\big(E(a,b; x, H)\big)\notag\\
	&=\mathcal{R}^*(a,b;x,H)+\mathcal{R}_1(a,b; x)+O\big(E(a,b; x, H)\big),\label{Rab}
\end{align}
where
\begin{gather}
	\mathcal{R}^*(a,b;x,H)=\frac{c_1(a,b)}{\pi}x^\frac{1}{2(a+b)}\!\!\sum_{1\leq h\leq aH}\mathop{{\sum}}_{\frac{bh}{a}\leq r\leq bH}\!\! b_rh^{-\frac{a+2b}{2(a+b)}}r^{-\frac{2a+b}{2(a+b)}}\cos\big(2\pi c_2(a,b)x^\frac{1}{a+b}(h^ar^b)^\frac{1}{a+b}\!-\!\frac{\pi}{4}\big),\notag\\
	\mathcal{R}_1(a,b; x)=\frac{c_1(a,b)}{\pi}x^\frac{1}{2(a+b)}\!\!\sum_{1\leq h\leq aH}\mathop{{\sum}}_{bH< r\ll hc^{\frac{a+b}{a}J}}\!\! h^{-\frac{a+2b}{2(a+b)}}r^{-\frac{2a+b}{2(a+b)}}\cos\big(2\pi c_2(a,b)x^\frac{1}{a+b}(h^ar^b)^\frac{1}{a+b}\!-\!\frac{\pi}{4}\big),	\label{R1ab} 
\end{gather}
with $b_{r}=\frac{1}{2}$ if $r=\frac{bh}{a}$ is integer, and $b_{r}=1$ for $\frac{bh}{a}<r\leq  bH$.

Similarly,  we have
\begin{equation}\label{fba}
	f(b,a;x)=\mathcal{R}(b,a; x)+G(b,a).
\end{equation}
Here
\begin{align}
		\mathcal{R}(b,a; x)&=\frac{c_1(b,a)}{\pi}x^\frac{1}{2(a+b)}\sum_{1\leq h\leq bH}\mathop{{\sum}'}_{\frac{ah}{b}\leq r\leq \frac{a}{b}hc^{\frac{a+b}{b}(J+1)}}h^{-\frac{2a+b}{2(a+b)}}r^{-\frac{a+2b}{2(a+b)}}\notag\\
		&\qquad\qquad\qquad\times \cos\big(2\pi c_2(b,a)x^\frac{1}{a+b}(h^br^a)^\frac{1}{a+b}-\frac{\pi}{4}\big)+O\big(E(b,a; x, H)\big)\notag\\
	  &=\frac{c_1(b,a)}{\pi}x^\frac{1}{2(a+b)}\sum_{1\leq r\leq bH}\mathop{{\sum}'}_{\frac{ar}{b}\leq h\leq \frac{a}{b}rc^{\frac{a+b}{b}(J+1)}}h^{-\frac{a+2b}{2(a+b)}}r^{-\frac{2a+b}{2(a+b)}}\notag\\
	  &\qquad\qquad\qquad\times \cos\big(2\pi c_2(b,a)x^\frac{1}{a+b}(h^ar^b)^\frac{1}{a+b}-\frac{\pi}{4}\big)+O\big(E(b,a; x, H)\big)\notag\\
	  &=\mathcal{R}^*(b,a;x,H)+\mathcal{R}_1(b,a; x)+O\big(E(b,a; x, H)\big),\label{Rba}
\end{align}
\begin{equation*}\label{Gba} 
	G(b,a; x) = O\Bigg(\sum_{m\leq x^\frac{1}{a+b}}\min\Big(1,\frac{1}{H\Big\|\frac{x^\frac{1}{b}}{m^\frac{a}{b}}\Big\|}\Big)\Bigg),
\end{equation*}
and
\begin{align*}
	&\mathcal{R}^*(b,a;x,H)=\frac{c_1(b,a)}{\pi}x^\frac{1}{2(a+b)}\!\!\sum_{1\leq r\leq bH}\mathop{{\sum}}_{\small\frac{ar}{b}\leq h\leq aH}\!\! b_hh^{-\frac{a+2b}{2(a+b)}}r^{-\frac{2a+b}{2(a+b)}}\cos\big(2\pi c_2(b,a)x^\frac{1}{a+b}(h^ar^b)^\frac{1}{a+b}\!-\!\frac{\pi}{4}\big),\notag\\
	&\mathcal{R}_1(b,a; x)=\frac{c_1(b,a)}{\pi}x^\frac{1}{2(a+b)}\!\!\sum_{1\leq r\leq bH}\mathop{{\sum}}_{aH< h\ll rc^{\frac{a+b}{b}J}}\!\! h^{-\frac{a+2b}{2(a+b)}}r^{-\frac{2a+b}{2(a+b)}}\cos\big(2\pi c_2(b,a)x^\frac{1}{a+b}(h^ar^b)^\frac{1}{a+b}\!-\!\frac{\pi}{4}\big),\label{R1ba}\\
	&E(b,a; x, H)=\sum_{1\leq h\leq bH}\frac{1}{h}\min\Big(h^{-\frac{1}{2}}x^\frac{1}{2(a+b)},\max\Big(\frac{1}{<\frac{a}{b}h>}, \frac{1}{[\frac{a}{b}hc^\frac{a+b}{b}]}\Big)\Big)+\mathcal{L}^3,\notag
\end{align*}
where $b_h=\frac{1}{2}$ if $h=\frac{ar}{b}$ is integer, and $b_{h}=1$ for $\frac{ar}{b}< h\leq aH$.

Denote
\begin{equation}\label{sR*}
	\Delta_{a,b}^*(x,H)=\mathcal{R}^*(a,b;x,H)+\mathcal{R}^*(b,a;x,H). 
\end{equation}
Using the symmetry $c_1(a,b)=c_1(b,a)$ and $c_2(a,b)=c_2(b,a)$, we see
\begin{equation}\label{Dlt*}
	\Delta_{a,b}^*(x,H)=\frac{c_1(a,b)}{\pi}x^\frac{1}{2(a+b)}\sum_{h=1}^{ aH} \sum_{r=1}^{bH}h^{-\frac{a+2b}{2(a+b)}}r^{-\frac{2a+b}{2(a+b)}}\cos\big(2\pi c_2(a,b)x^\frac{1}{a+b}(h^ar^b)^\frac{1}{a+b}-\frac{\pi}{4}\big).
\end{equation}
Write
\begin{equation}\label{SE}
	E_{a,b}(x,H)=E(a,b; x, H)+E(b,a; x, H). 
\end{equation}
	 For $a, b\in  \mathbb{Z}^+$,  we have 
	 \begin{equation}\label{eabq}
	 	E_{a,b}(x,H)\ll \mathcal{L}^3.
	 \end{equation}
	For $\dfrac{a}{b}\in \mathbb{\overline{Q}}\setminus \mathbb{Q}$, by applying Lemma \ref{lem:AgDp}, we get
	\begin{equation}\label{eabr}
		\begin{split}
			E_{a,b}(x,H)&\ll \sum_{1\leq h\leq bH}\frac{1}{h}\min\Big(h^{-\frac{1}{2}}x^\frac{1}{2(a+b)},\frac{1}{\|\frac{a}{b}h\|}+\frac{1}{\|\frac{b}{a}h\|}
			 \Big)+\mathcal{L}^3\\
			 &\ll \sum_{1\leq h\leq bH}\frac{1}{h}\min\Big(h^{-\frac{1}{2}}x^\frac{1}{2(a+b)},h^{1+\varepsilon} \Big)+\mathcal{L}^3\\
			 &\ll \sum_{1\leq h\leq bH}\frac{1}{h}\min\Big(h^{-\frac{1}{2}}x^\frac{1}{2(a+b)}\Big)^\frac{2}{3}h^{\frac{1}{3}(1+\varepsilon)}+\mathcal{L}^3\\
			 &\ll x^\frac{1}{3(a+b)}H^\varepsilon.
		\end{split}
		\end{equation}

Combining \eqref{Delta} \eqref{fab},\eqref{Rab},\eqref{fba}, \eqref{Rba}, \eqref{sR*} and \eqref{SE}, we obtain that
\begin{multline}\label{Dlt}
	\Delta_{a,b}(x)=\Delta_{a,b}^*(x,H)+\mathcal{R}_1(a,b; x)+\mathcal{R}_1(b,a; x)+G(a,b;x)+G(b,a;x)	 +O\big(E_{a,b}(x,H)\big),
\end{multline}
which can be viewed as a Voronoi type formula of $\Delta_{a,b}(x)$.

\section{On a series and a special sum}\label{sr&sm}

Suppose that $a,b>0$ are fixed real numbers, $T\geq 10, H\geq 2, R\geq 2$ are large parameters satisfying  $\log H\ll\log T$ and $\log R\ll\log T.$ 
Denote

\begin{align*}
\Sigma_1(H;a,b)&=\sum_{\substack{h_1>H;\\ h_1^ar_1^b=h_2^ar_2^b}}(h_1h_2)^{-\frac{a+2b}{2(a+b)}}(r_1r_2)^{-\frac{2a+b}{2(a+b)}},\\
\Sigma_2(T; H, R, a,b)&=\sum_{\substack{h_i\leq H, r_i\leq R, i=1,2;\\{\small 0<|\eta(h_1,r_1,h_2,r_2)|<\frac{1}{10}(h_1^ar_1^bh_2^ar_2^b)^\frac{1}{2(a+b)}}}}(h_1h_2)^{-\frac{a+2b}{2(a+b)}}(r_1r_2)^{-\frac{2a+b}{2(a+b)}}\\
	&\qquad\qquad\qquad\qquad\qquad\qquad\times\min\big(T^\frac{1}{a+b},\frac{1}{|\eta(h_1,r_1,h_2,r_2)|}\big),
\end{align*}
where
 $$\eta(h_1,r_1,h_2,r_2)=(h_1^ar_1^b)^\frac{1}{a+b}-(h_2^ar_2^b)^\frac{1}{a+b}.$$

In this section, we shall study $\Sigma_1(H;a,b)$ and $\Sigma_2(T; H,R, a,b)$, and get the following estimates.
\begin{lemma}\label{lem:S1Hab}
	\begin{equation*}
		\Sigma_1(H;a,b) \ll H^{-\frac{b}{2(a+b)}}\log^2(2H).
	\end{equation*}
\end{lemma}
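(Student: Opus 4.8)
The plan is to split $\Sigma_1(H;a,b)$ into a diagonal part and an off-diagonal part, exploiting that the summand depends on $a,b$ only through the exponents $\alpha:=\frac{a+2b}{2(a+b)}$ and $\beta:=\frac{2a+b}{2(a+b)}$, which satisfy $2\alpha>1$, $2\beta>1$ and $1-2\alpha=-\frac{b}{a+b}$. Since $\alpha,\beta$ and the target exponent $\frac{b}{2(a+b)}$ are functions of the ratio $a/b$ alone, and the equation $h_1^ar_1^b=h_2^ar_2^b$ is unchanged under scaling $(a,b)\mapsto(\lambda a,\lambda b)$, I may assume without loss of generality that either $a/b\notin\mathbb{Q}$ or that $a,b$ are coprime positive integers. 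The diagonal terms $h_1=h_2$ force $r_1=r_2$ and contribute $\zeta(2\beta)\sum_{h>H}h^{-2\alpha}\ll H^{1-2\alpha}=H^{-b/(a+b)}$, which is already well inside the claimed bound, so the entire issue is the off-diagonal contribution.

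In the irrational case I would show the off-diagonal is empty. If $h_1^ar_1^b=h_2^ar_2^b$ with $h_1\ne h_2$, then $(h_1/h_2)^a=(r_2/r_1)^b$, so $(h_1/h_2)^{a/b}=r_2/r_1$ is rational; but $a/b$ is an irrational algebraic number, and a transcendence/Diophantine argument (Gelfond--Schneider, in the spirit of Lemma \ref{lem:AgDp}) shows that an irrational algebraic power of a rational number $\ne 1$ cannot be rational. This contradiction leaves only the diagonal, and the bound follows at once.

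In the coprime integer case I would parametrise the solutions by greatest common divisors. Put $g=\gcd(h_1,h_2)$, $e=\gcd(r_1,r_2)$ and write $h_i=gm_i$, $r_i=en_i$ with $\gcd(m_1,m_2)=\gcd(n_1,n_2)=1$; the equation becomes $m_1^an_1^b=m_2^an_2^b$. Coprimality forces $m_1^a=n_2^b$ and $m_2^a=n_1^b$, and since $\gcd(a,b)=1$ every primitive solution has the shape $(m_1,n_1,m_2,n_2)=(s^b,t^a,t^b,s^a)$ with $\gcd(s,t)=1$, the off-diagonal being $s\ne t$. The summand then factorises: the $e$-sum gives $\zeta(2\beta)$, while the $g$-sum, constrained by $h_1=gs^b>H$, gives $\sum_{g>H/s^b}g^{-2\alpha}\ll\max(1,H/s^b)^{-b/(a+b)}$. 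What remains is a two-variable sum over $s,t$; splitting at $s=H^{1/b}$, the range $s\le H^{1/b}$ reduces to $H^{-b/(a+b)}\sum_{s\le H^{1/b}}s^{-a}\ll H^{-b/(a+b)}\log(2H)$, and the range $s>H^{1/b}$ contributes $\ll H^{-b/(a+b)}$. Hence $\Sigma_1(H;a,b)\ll H^{-b/(a+b)}\log(2H)$, which implies the stated estimate.

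The hard part is the off-diagonal structural step: establishing the exact primitive parametrisation $(s^b,t^a,t^b,s^a)$ in the integer case and ruling out off-diagonal coincidences in the irrational case, and then arranging the geometric summations so that only the power $H^{-b/(a+b)}$ and a single logarithm survive; the rest is bookkeeping. I would remark that the exponent $\frac{b}{2(a+b)}$ appearing in the statement is precisely what a Cauchy--Schwarz argument yields: writing $\Sigma_1(H;a,b)=\sum_nA(n)B(n)$ with $A(n)$ the inner sum over $h^ar^b=n$ restricted to $h>H$ and $B(n)$ the full inner sum, one has $\Sigma_1\le(\sum_nA(n)^2)^{1/2}(\sum_nB(n)^2)^{1/2}$, where $\sum_nB(n)^2=G_{a,b}=O(1)$ and $\sum_nA(n)^2$ is the ``both variables large'' analogue, bounded by $\ll H^{-b/(a+b)}\log^{O(1)}(2H)$ exactly as above; taking the square root returns $H^{-b/(2(a+b))}\log^2(2H)$. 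I would present the direct route, since it is self-contained and even stronger, while this remark explains the shape of the bound as stated.
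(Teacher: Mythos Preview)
Your argument is correct in the cases that matter for the paper (namely $a/b$ algebraic), and your direct route in fact yields the sharper estimate $\Sigma_1(H;a,b)\ll H^{-b/(a+b)}\log(2H)$. It is, however, a genuinely different proof from the paper's.

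The paper never analyses the structure of the equation $h_1^ar_1^b=h_2^ar_2^b$. Instead it applies the Zhai--Cao counting estimate (Lemma~\ref{lem:Dph}) with $\mu=a$, $\nu=b$ on dyadic boxes $h_i\sim H_i$, $r_i\sim R_i$, lets $\delta\to 0$ so that only the term $(H_1H_2R_1R_2)^{1/2}\log^2(2H_1H_2R_1R_2)$ survives, multiplies by the weight $(H_1H_2)^{-\alpha}(R_1R_2)^{-\beta}$, and then sums over the four-fold dyadic decomposition $h_1\sim 2^{j_1}H$, $h_2\sim 2^{j_2}$, $r_i\sim 2^{j_{2+i}}$. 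The square-root saving in the counting lemma is precisely what produces the exponent $\tfrac{b}{2(a+b)}$ and the factor $\log^2$; your Cauchy--Schwarz remark is the moral counterpart of this. The paper's approach is uniform in all real $a,b>0$ and needs no case distinction, at the price of a weaker exponent than yours.

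Your approach, by contrast, classifies the solutions exactly --- empty off-diagonal when $a/b$ is irrational algebraic (Gelfond--Schneider), and the explicit $(g,e,s,t)$-parametrisation when $a,b$ are coprime integers --- and is both more elementary and sharper. The one caveat is scope: the section containing the lemma opens with ``Suppose that $a,b>0$ are fixed real numbers'', so the statement formally includes transcendental $a/b$, where your off-diagonal-is-empty step can fail (for instance $a/b=\log 2/\log 3$ admits the nondiagonal solution $3^a\cdot 1^b=1^a\cdot 2^b$). This does not affect Theorems~\ref{thm:meansquare_i} and~\ref{thm:meansquare_q}, which assume algebraic $a,b$, but if you want the lemma in the full generality under which it is stated you would need either the paper's counting-lemma route or a separate treatment of such off-diagonal terms.
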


\begin{remark}\label{lem:gab}
	Due to the symmetry of $a$ and $b$, we see that
	\begin{equation*}
		\Sigma_1(R;b,a)=\sum_{\substack{r_1>R;\\ h_1^ar_1^b=h_2^ar_2^b}}(h_1h_2)^{-\frac{a+2b}{2(a+b)}}(r_1r_2)^{-\frac{2a+b}{2(a+b)}}\ll R^{-\frac{a}{2(a+b)}}\log^2(2R).
	\end{equation*}
	 It is following that the infinity series 
	\begin{equation*}
		G_{a,b}=\mathop{\sum_{h_1=1}^{\infty}\sum_{h_2=1}^{\infty}\sum_{j_1=1}^{\infty}\sum_{j_2=1}^{\infty}}_{h_1^ar_1^b=h_2^ar_2^b}(h_1h_2)^{-\frac{a+2b}{2(a+b)}}(r_1r_2)^{-\frac{2a+b}{2(a+b)}}
	\end{equation*}
	 is convergent.
\end{remark}

\begin{lemma}\label{lem:Sab2}
	(I) If $\frac{a}{b}\in \mathbb{\overline{Q}}\setminus \mathbb{Q}$, then there exists  a positive constant $c_4(a,b)$, such that
	\begin{equation*}
		\Sigma_2(T; H,R, a,b)+\Sigma_2(T; H,R, b,a)
		\ll T^{\frac{1}{a+b}}\exp\big\{-c_4(a,b)(\log T)^\frac{1}{2}(\log\log T)^{-\frac{1}{2}}\big\};
	\end{equation*}
	
	(II) If $a, b\in  \mathbb{Z}^+$ with $b\geq a$ and $(a,b)=1$, then
	\begin{equation*}
		\Sigma_2(T; H,R, a,b)+\Sigma_2(T; H,R,b,a) \ll T^{\frac{1}{a+b}-\frac{a}{b(a+b)(a+b-1)}}\mathcal{L}^4.
	\end{equation*}
\end{lemma}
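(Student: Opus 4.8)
The plan is to reduce both sums to a single weighted counting problem and then insert the arithmetic of $a/b$. Writing $u_i=(h_i^ar_i^b)^{1/(a+b)}$ we have $\eta=u_1-u_2$, and by the $(a,b)\leftrightarrow(b,a)$ symmetry it suffices to treat $\Sigma_2(T;H,R,a,b)$. Since the constraint $0<|\eta|<\tfrac1{10}(u_1u_2)^{1/2}$ forces $u_1\asymp u_2$, I first decompose dyadically: $u_1\asymp u_2\asymp N$ with $N\ll N_{\max}=(H^aR^b)^{1/(a+b)}$, and $|\eta|\asymp V$. Letting $\mathcal A(N,V)$ denote the weighted number of quadruples with $u_i\asymp N$ and $|\eta|\asymp V$, the truncation $\min(T^{1/(a+b)},|\eta|^{-1})$ gives
\[
\Sigma_2(T;H,R,a,b)\ \ll\ \sum_{N}\Big(T^{\frac{1}{a+b}}\,\mathcal A\big(N,\le T^{-\frac{1}{a+b}}\big)+\!\!\sum_{V>T^{-\frac{1}{a+b}}}\!\!\frac{\mathcal A(N,V)}{V}\Big),
\]
so everything reduces to estimating $\mathcal A(N,V)$.

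Next I would linearize $\eta$. On a block $u_i\asymp N$ the mean value theorem gives $|\eta|\asymp N^{\,1-(a+b)}\,|h_1^ar_1^b-h_2^ar_2^b|$, so $|\eta|\asymp V$ is equivalent to $|h_1^ar_1^b-h_2^ar_2^b|\asymp VN^{a+b-1}$. Hence $\mathcal A(N,V)$ is governed by the number of lattice points $(h_2,r_2)$ with $h_2\le H$, $r_2\le R$ lying in the thin band between the level curves $h^ar^b=h_1^ar_1^b\pm VN^{a+b-1}$, summed (with weights) over $(h_1,r_1)$; the convexity and curvature of $h^ar^b=\mathrm{const}$ is the geometric input for counting points in such a band. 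This reduction is common to both parts; the parts differ only in how small $V$ is allowed to be.

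For part (II), with $a,b\in\mathbb Z^+$ and $(a,b)=1$, the decisive feature is integrality: off the diagonal $h_1^ar_1^b-h_2^ar_2^b$ is a nonzero integer, so $|\eta|\gg N^{1-(a+b)}$, i.e.\ $V\gg N^{1-(a+b)}$. Combining the band count for $\mathcal A(N,V)$ with the cap at $V=T^{-1/(a+b)}$ (which is precisely what beats the trivial bound), performing the harmonic $V$-sum, and then summing over the dyadic scales $N\le N_{\max}$, the estimate is dominated by the top scale; balancing the band count against the cap, together with the admissible choice of $H$, produces the exponent $\tfrac1{a+b}-\tfrac{a}{b(a+b)(a+b-1)}$. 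The logarithmic losses from the $O(\mathcal L)$ dyadic levels, from the divisor bound $d_{a,b}(n)\ll n^\varepsilon$ used in the band count, and from the harmonic $V$-sum are absorbed into $\mathcal L^4$. This sharpens \cite{cao2016Mean} by a more careful treatment of the near-diagonal count.

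For part (I), with $a/b\in\overline{\mathbb Q}\setminus\mathbb Q$, integrality is lost and the input is Lemma \ref{lem:AgDp}. First, irrationality of $a/b$ forces the diagonal: a solution of $h_1^ar_1^b=h_2^ar_2^b$ with $h_1\ne h_2$ would give $(h_1/h_2)^a=(r_2/r_1)^b$, hence $a/b$ rational, a contradiction; so the sum is genuinely off-diagonal. Quantitatively, writing $|\eta|\asymp N|L|$ with $L=a\log(h_1/h_2)+b\log(r_1/r_2)$ and invoking Lemma \ref{lem:AgDp} in the form already used for \eqref{eabr} (that $\|h\,a/b\|\gg h^{-1-\varepsilon}$), I would bound $|\eta|$ from below by an explicit decreasing function of the heights $h_i,r_i$; a near-coincidence with $V$ below a chosen threshold can then occur only for large heights, restricting the range of blocks that contribute. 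Inserting this into the block estimates and optimizing the threshold against $T$ yields the saving $\exp\{-c_4(a,b)(\log T)^{1/2}(\log\log T)^{-1/2}\}$. The hard part will be exactly this calibration: turning the qualitative ``irrationality forbids coincidence'' into an effective, fully uniform lower bound for $|\eta|$, and choosing the threshold so that the growth of the summation range is balanced against the gain from $T$ to give the precise subpolynomial saving rather than a weaker power of $\mathcal L$; once the band count of the second paragraph is in place, part (II) is by comparison routine.
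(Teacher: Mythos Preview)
Your overall architecture---dyadic localization in $u_i\asymp N$, splitting at $|\eta|=T^{-1/(a+b)}$, linearizing $\eta$ by the mean value theorem, and feeding a band count of Lemma~\ref{lem:Dph} type---matches the paper, and your treatment of part~(II) via the integrality gap $|h_1^ar_1^b-h_2^ar_2^b|\ge 1$ is exactly right.

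The gap is in part~(I). You propose to bound $|\eta|\asymp N|L|$, $L=a\log(h_1/h_2)+b\log(r_1/r_2)$, from below using only Lemma~\ref{lem:AgDp} (Roth). But Roth controls $\|n\alpha\|$ for \emph{integer} $n$ and algebraic $\alpha$; it says nothing about the linear form $aX+bY$ when $X=\log(h_1/h_2)$ and $Y=\log(r_1/r_2)$ are transcendental and $X/Y$ is irrational. The paper therefore invokes Baker's theorem on linear forms in logarithms (Lemma~\ref{lem:loglf}, packaged as Lemma~\ref{lem:etal}) to obtain
\[
|\eta|>\exp\bigl\{-c_3(a,b)(\log M)^2\log\log T\bigr\},\qquad M=\max\{h_1,h_2,r_1,r_2\},
\]
with Roth used only in the special subcase $X/Y\in\mathbb{Q}$. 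It is precisely this Baker-type bound that, when balanced against the cap $T^{1/(a+b)}$, forces $\log M\gg(\log T/\log\log T)^{1/2}$ and hence yields the saving $M^{-a/(a+b)}\ll\exp\{-c_4(\log T)^{1/2}(\log\log T)^{-1/2}\}$; a Roth-only input would give neither the correct shape nor this exponent. (A smaller point: your diagonal argument ``$(h_1/h_2)^a=(r_2/r_1)^b$ hence $a/b$ rational'' is not a valid implication; one actually needs Gelfond--Schneider here, though this is irrelevant for $\Sigma_2$ since $\eta\ne 0$ is already assumed.)
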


To prove these results, we need the following lemmas. Lemma \ref{lem:Dph} corresponds to Lemma 5.1 in Zhai and Cao \cite{zhai2010mean}, with a little modification  in expression where we replace  "$\geq 1$" in their lemma by "$\geq \frac{1}{2}$". Lemma \ref{lem:baker} follows from Theorem 2 of  \cite{baker1967Linear}. Lemma \ref{lem:loglf} is Theorem 10.21 of Waldschmidt \cite{waldschmidt2000Diophantine}, see also \cite[Theorem 1.1]{philippon1988Lower}. Lemma \ref{lem:etal} can be used to get a lower bound for $\eta(h_1,r_1,h_2,r_2)$.

\begin{lemma}\label{lem:Dph}
	Let $\mu$ and $\nu$ be fixed non-zero real numbers. Suppose $H_1\geq \frac{1}{2}$, $H_2\geq \frac{1}{2}$, $R_1\geq \frac{1}{2}$, $R_2\geq \frac{1}{2}$ and $\delta>0$ are  real numbers. Denote
	\begin{equation*}
		\mathcal{A}(H_1,H_2,R_1,R_2; \delta)=\#\{(h_1, h_2, r_1, r_2)||h_1^\mu r_1^\nu-h_2^\mu r_2^\nu|\leq \delta, h_i\!\sim\! H_i, r_i\!\sim \!R_i, i=1,2\}.
	\end{equation*}
	Then
	$$\mathcal{A}(H_1, H_2, R_1,R_2; \delta)\ll \delta (H_1H_2)^{1-\frac{\mu}{2}}(R_1R_2)^{1-\frac{\nu}{2}}+(H_1H_2R_1R_2)^\frac{1}{2}\log^2( 2H_1H_2R_1R_2),$$
	where $\ll$-constant depends only on $\mu, \nu$. 
\end{lemma}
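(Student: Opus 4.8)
The plan is to estimate $\mathcal{A}$ by a direct counting argument, writing $V_i=h_i^\mu r_i^\nu$ and exploiting that both $\mathcal{A}$ and the claimed bound are symmetric under interchanging the indices $1$ and $2$. First I would dispose of the degenerate cases. Since only the magnitudes $|\mu|,|\nu|$ and the strict monotonicity of $t\mapsto t^\mu,\ t\mapsto t^\nu$ enter the estimates, the signs of $\mu,\nu$ are immaterial. If the scales $H_1^\mu R_1^\nu$ and $H_2^\mu R_2^\nu$ are not comparable then $|V_1-V_2|>\delta$ always and $\mathcal{A}=0$; so I may assume $H_1^\mu R_1^\nu\asymp H_2^\mu R_2^\nu=:V$. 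Likewise, if $\delta\gg V$ the first term already dominates the trivial bound $\mathcal{A}\le\#\{h_i\sim H_i,\ r_i\sim R_i\}\asymp H_1H_2R_1R_2=(H_1H_2R_1R_2/V)\,V\ll\delta(H_1H_2)^{1-\mu/2}(R_1R_2)^{1-\nu/2}$, so I may also assume $\delta\ll V$.

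For the main term I would fix the pair $(h_1,r_1)$, hence the value $V_1\asymp V$, and count the pairs $(h_2,r_2)$ with $|h_2^\mu r_2^\nu-V_1|\le\delta$. For each fixed $h_2\sim H_2$ the quantity $r_2^\nu$ is confined to an interval of length $2\delta h_2^{-\mu}\asymp\delta H_2^{-\mu}$; since $\tfrac{d}{dr_2}r_2^\nu\asymp R_2^{\nu-1}$ on $r_2\sim R_2$, this interval contains $\ll\delta R_2^{1-\nu}H_2^{-\mu}+1$ admissible integers $r_2$. Summing the genuine ``length'' contribution over $h_2\sim H_2$ and then over $(h_1,r_1)$ gives $\ll\delta H_1R_1H_2^{1-\mu}R_2^{1-\nu}$, which, on inserting $H_2^{-\mu}R_2^{-\nu}=V^{-1}\asymp(H_1H_2)^{-\mu/2}(R_1R_2)^{-\nu/2}$ from the scale comparability, is exactly the asserted main term $\delta(H_1H_2)^{1-\mu/2}(R_1R_2)^{1-\nu/2}$.

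The difficulty is the error left over from the ``$+1$'' in the interval count: carried through naively it contributes $H_1R_1H_2$, or $H_1R_1\min(H_2,R_2)$ after optimizing which variable is solved for, and this already exceeds the target $(H_1H_2R_1R_2)^{1/2}$ whenever the four ranges are comparable (e.g.\ $H_1R_1\min(H_2,R_2)\asymp N^3$ against $N^2$ when all are $\asymp N$). To recover the symmetric square-root term I would instead fix the pair $(h_1,h_2)$ and count $(r_1,r_2)$. By homogeneity the zero set of $h_1^\mu r_1^\nu-h_2^\mu r_2^\nu$ is the straight line $r_1=(h_2/h_1)^{\mu/\nu}r_2$ through the origin, and the band $|h_1^\mu r_1^\nu-h_2^\mu r_2^\nu|\le\delta$ is a strip of width $\asymp\delta H_1^{-\mu}R_1^{1-\nu}$ about it. The ``area'' of the strip reproduces the main term once more (summing $R_2\cdot\delta H_1^{-\mu}R_1^{1-\nu}$ over $(h_1,h_2)$ again yields $\delta H_1H_2R_1R_2/V$), while the residual count is the number of triples $(h_1,h_2,r_2)$ for which the line passes within this width of an integer, i.e.\ $\|(h_2/h_1)^{\mu/\nu}r_2\|$ is small. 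I would split these triples by $d=\gcd(h_1,h_2)$, reducing to lattice points near lines of slope $(b/a)^{\mu/\nu}$ with $h_1=da,\ h_2=db$, and estimate the near-integer counts by elementary bounds for lattice points near a line together with divisor bounds. The diagonal $h_1=h_2,\ r_1=r_2$ contributes exactly $\min(H_1,H_2)\min(R_1,R_2)\le(H_1H_2R_1R_2)^{1/2}$, and the remaining near-diagonal coincidences produce the two logarithmic factors.

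The main obstacle is precisely this last step: passing from the crude, asymmetric ``$+1$'' error of size $H_1R_1\min(H_2,R_2)$ (too large by a factor as big as $\min(H_2,R_2)$) to the clean symmetric bound $(H_1H_2R_1R_2)^{1/2}\log^2(2H_1H_2R_1R_2)$. This forces one to play the two orderings of the variables against each other and to use the multiplicative $\gcd$/divisor structure of the near-coincidences rather than a purely analytic spacing estimate. The endpoint bookkeeping in the dyadic ranges (single-element ranges, lines aligned with the axes) is what necessitates relaxing the hypotheses from ``$\ge 1$'' to ``$\ge\tfrac12$'' as indicated before the statement, but this is a routine modification once the core count above is in place.
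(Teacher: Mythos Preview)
The paper does not prove this lemma at all: it simply quotes it as Lemma~5.1 of Zhai and Cao \cite{zhai2010mean}, noting only the cosmetic change from ``$\ge 1$'' to ``$\ge\tfrac12$'' in the hypotheses. So there is no ``paper's own proof'' to compare with; your proposal is an attempt to supply one.

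Your treatment of the main term is fine: the reduction to $H_1^\mu R_1^\nu\asymp H_2^\mu R_2^\nu=:V$ and $\delta\ll V$, and the length count giving $\delta(H_1H_2)^{1-\mu/2}(R_1R_2)^{1-\nu/2}$, are exactly right.

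The gap is in the second term. After fixing $(h_1,h_2)$ and counting $(r_1,r_2)$ near the line $r_1=(h_2/h_1)^{\mu/\nu}r_2$, the residual ``$+1$'' error is not $O(1)$ per pair $(h_1,h_2)$ as you implicitly use; for each $r_2$ one has $\le 2w+1$ admissible $r_1$, so the crude residual is $H_1H_2R_2$ (or $H_1H_2\min(R_1,R_2)$), which is still far from $(H_1H_2R_1R_2)^{1/2}$. Your proposed remedy, splitting by $d=\gcd(h_1,h_2)$ and invoking ``divisor bounds'', does not close this: for general real $\mu,\nu$ the slope $(b/a)^{\mu/\nu}$ carries no multiplicative structure, so the distribution of $\|(b/a)^{\mu/\nu}r_2\|$ is governed by the continued fraction of an arbitrary real number, not by divisors of anything. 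The $\gcd$ substitution only repackages the sum without giving a usable cancellation.

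What actually produces the symmetric factor $(H_1H_2R_1R_2)^{1/2}$ is a Cauchy--Schwarz step: partition the value range into intervals $I_k$ of length $\delta$, set $a_k=\#\{(h_1,r_1):h_1^\mu r_1^\nu\in I_k\}$ and $b_k$ likewise, so that $\mathcal{A}\ll\sum_k a_kb_k\le(\sum a_k^2)^{1/2}(\sum b_k^2)^{1/2}$. This reduces the problem to the two \emph{symmetric} counts $\mathcal{A}(H_i,H_i,R_i,R_i;\delta)$, where the diagonal already gives $H_iR_i$ and your spacing bounds, combined with a Farey dissection in \emph{both} the $h$- and $r$-ratios (write $h_1/h_2=p/q$ and $r_1/r_2=s/t$ in lowest terms and weight by the number of representations), handle the near-diagonal with the two logarithms. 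Either carry out that double-Farey argument in full, or, as the paper does, cite Zhai--Cao directly.
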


\begin{lemma}\label{lem:baker}
	Let $m \geq 1$ be a positive integer, $\alpha_1, \ldots, \alpha_m$ nonzero algebraic numbers, $\lambda_1, \ldots, \lambda_m$ logarithms of $\alpha_1, \ldots, \alpha_m$ respectively. If $\lambda_1, \ldots, \lambda_m$ are linearly independent over $\mathbb{Q}$, then they are linealy independent over $\mathbb{\overline{Q}}$, too.
\end{lemma}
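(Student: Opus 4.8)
The plan is to recognize that this lemma is nothing other than Baker's celebrated theorem on linear forms in the logarithms of algebraic numbers, so rather than attempt a proof from first principles I would quote it directly. The statement I would invoke, in the form appearing as Theorem 2 of \cite{baker1967Linear}, is that if $\alpha_1,\dots,\alpha_m$ are nonzero algebraic numbers whose logarithms $\lambda_1,\dots,\lambda_m$ are linearly independent over $\mathbb{Q}$, then $1,\lambda_1,\dots,\lambda_m$ are linearly independent over $\overline{\mathbb{Q}}$. Our lemma is the (formally weaker) assertion that the $\lambda_i$ alone are linearly independent over $\overline{\mathbb{Q}}$, and this drops out of Baker's theorem by a one-line reduction.

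Concretely, suppose for contradiction that there were a nontrivial relation $\beta_1\lambda_1+\cdots+\beta_m\lambda_m=0$ with $\beta_i\in\overline{\mathbb{Q}}$ not all zero. Setting $\beta_0=0$, this is a nontrivial $\overline{\mathbb{Q}}$-linear relation among $1,\lambda_1,\dots,\lambda_m$, contradicting their independence over $\overline{\mathbb{Q}}$ as guaranteed by Baker's theorem. Hence no such relation exists, and $\lambda_1,\dots,\lambda_m$ are linearly independent over $\overline{\mathbb{Q}}$, as claimed. The only points to verify before the citation are that the $\lambda_i$ are genuinely logarithms of nonzero algebraic numbers and are $\mathbb{Q}$-linearly independent, both of which are explicit hypotheses of the lemma; in particular one must take the same fixed choice of logarithm branches throughout, since Baker's theorem applies to any fixed determination of $\log\alpha_i$.

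The honest assessment of the difficulty is that there is no elementary obstacle in the deduction above; all of the depth is hidden inside Baker's theorem itself. Its proof proceeds by constructing an auxiliary function with high-order vanishing at a large array of integer points, using the assumed linear relation to propagate that vanishing, extrapolating via the maximum-modulus principle, and finally deriving a contradiction against a Liouville-type lower bound for a nonzero algebraic quantity. Reproducing that machinery is far beyond what is needed here, so \textbf{the hard part is entirely encapsulated in the cited result}, and the work on my side is limited to matching hypotheses and the trivial reduction that discards the coefficient of $1$.
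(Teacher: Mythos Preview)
Your proposal is correct and matches the paper's approach exactly: the paper simply states that this lemma follows from Theorem~2 of \cite{baker1967Linear} without giving any further argument. Your added one-line reduction (discarding the coefficient of $1$) is a harmless elaboration of what the paper leaves implicit.
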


\begin{lemma}\label{lem:loglf}
	Let $m$, $\alpha_1, \ldots, \alpha_m$, $\lambda_1, \ldots, \lambda_m$ be as stated in Lemma \ref{lem:baker}, $\beta_1, \ldots, \beta_m$ algebraic numbers,  $D$ the degree over $\mathbb{Q}$ of the number field
	\(\mathbb{Q}(\alpha_1, \ldots, \alpha_m, \beta_1, \ldots, \beta_m).\) Assume that the number
	\(\Lambda = \beta_1 \lambda_1 + \cdots + \beta_m \lambda_m	\)
	is nonzero.	
	
	Let $A_1, \ldots, A_m, B, E$ and $E^*$ be positive numbers with $E \geq e$,	$E^*=\max\{E, D\}$ and
	\[	\log A_i = \max \left\{ h(\alpha_i), \frac{E}{D} |\lambda_i|, \frac{m\log E}{D} \right\} \quad (1 \leq i \leq m),\]
	\[\log B \geq \max_{0 \leq i \leq m} h(\beta_i) \quad \text{and} \quad B \geq \max\{E^*, \log A_1, \ldots, \log A_m\},	\]
	where $h(\alpha)$ is the (Weil) absolute logarithmic height of an algebraic number $\alpha$.
	Then
	\[	|\Lambda| \geq \exp \left\{ -C^m m^{2m} D^{m+2} (\log A_1) \cdots (\log A_m) (\log B) (\log E^*) (\log E)^{-m-1} \right\}.	\]
\end{lemma}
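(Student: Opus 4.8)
The statement is the Philippon--Waldschmidt lower bound for a nonzero linear form in logarithms of algebraic numbers; in the paper it is simply quoted from Waldschmidt's book, but a self-contained proof proceeds by Baker's transcendence method, carried out in the formalism of commutative linear algebraic groups. The plan is to argue by contradiction. Set $G=\mathbb{G}_a\times\mathbb{G}_m^m$, consider the analytic one-parameter subgroup $z\mapsto(z,e^{\lambda_1 z},\dots,e^{\lambda_m z})$, and encode the near-degeneracy of $\Lambda=\beta_1\lambda_1+\cdots+\beta_m\lambda_m$ geometrically. Assuming $|\Lambda|$ is strictly smaller than the claimed bound, I would build an auxiliary function that is forced to vanish so extensively along the orbit of the point $(1,\alpha_1,\dots,\alpha_m)$ that a zero estimate rules it out, producing the contradiction.

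First I would construct the auxiliary function. Fix integer parameters $L_0$ (degree in the additive variable), $L_1$ (degree in each multiplicative variable), a number of points $S$, and a vanishing order $T_0$. Working over the number field $\mathbb{Q}(\alpha_1,\dots,\alpha_m,\beta_1,\dots,\beta_m)$ of degree $D$, Siegel's lemma yields a nonzero polynomial $P(X_0,X_1,\dots,X_m)$ with algebraic-integer coefficients of controlled height such that the function $F(z)=P\!\left(z,e^{\lambda_1 z},\dots,e^{\lambda_m z}\right)$ and all its derivatives of order below $T_0$ vanish at $z=s$ for $0\le s<S$. This is solvable precisely when the number of coefficients of $P$ exceeds the number of linear vanishing conditions, and Siegel's lemma simultaneously bounds the height of $P$ in terms of $\log A_1,\dots,\log A_m$ and $\log B$.

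Next comes the analytic extrapolation, where the smallness of $\Lambda$ is exploited. Along the relevant arithmetic progression the values of $F$ and its low-order derivatives at further points are close to algebraic numbers of bounded height, the error being governed by $|\Lambda|$; applying the maximum-modulus principle (Schwarz lemma) on complex disks whose radii grow like a multiple of $ES$, these values are forced to be so small that a Liouville inequality compels them to vanish exactly. Iterating this step extends the vanishing of $F$ to many more points and higher order than were imposed at the construction stage. The parameter $E\ge e$ controls the radius of the disks, hence the reach of the extrapolation, which is exactly why $E$, $E^*=\max\{E,D\}$, and the factor $(\log E)^{-m-1}$ surface in the final estimate.

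Finally I would feed the enlarged vanishing into a zero (multiplicity) estimate on $G$, in the style of Philippon or Masser--Wüstholz: a nonzero $P$ of the prescribed degrees cannot vanish to such high order along the orbit of $(1,\alpha_1,\dots,\alpha_m)$ unless the $\lambda_i$ lie on a proper algebraic subgroup, i.e.\ satisfy a nontrivial $\overline{\mathbb{Q}}$-linear relation. But Lemma~\ref{lem:baker} guarantees that the $\mathbb{Q}$-linearly independent logarithms $\lambda_1,\dots,\lambda_m$ are also linearly independent over $\overline{\mathbb{Q}}$, so no such relation exists; hence $P\equiv 0$, contradicting its nontrivial construction. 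Therefore $|\Lambda|$ must be at least the asserted bound, and optimizing $L_0,L_1,S,T_0$ --- balancing the Siegel admissibility inequality against the zero-estimate threshold --- yields the explicit exponent $-C^m m^{2m}D^{m+2}(\log A_1)\cdots(\log A_m)(\log B)(\log E^*)(\log E)^{-m-1}$. The main obstacle is not the logical skeleton but the sharp, fully explicit dependence on every parameter: securing the clean $(\log E)^{-m-1}$ factor, the precise separation of $E$ from $E^*$, and the combinatorial $m^{2m}$ constant requires both the delicate Philippon--Waldschmidt parameter optimization and a zero estimate with effective degree bounds, and it is the joint calibration of the extrapolation/Liouville step with the zero estimate at optimal constants that constitutes the real technical difficulty.
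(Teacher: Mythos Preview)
Your outline is a faithful sketch of the Baker--Philippon--Waldschmidt method that underlies this lower bound, and the steps you list (auxiliary construction via Siegel's lemma on $G=\mathbb{G}_a\times\mathbb{G}_m^m$, Schwarz-lemma extrapolation exploiting the smallness of $\Lambda$, Liouville inequality, zero estimate, parameter optimization) are indeed the ingredients of Waldschmidt's proof of his Theorem~10.21.

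That said, there is nothing to compare: the paper does not prove this lemma at all. It is stated in Section~\ref{yubei} as a preliminary result and is simply quoted from the literature, with the sentence ``Lemma~\ref{lem:loglf} is Theorem~10.21 of Waldschmidt~\cite{waldschmidt2000Diophantine}, see also~\cite[Theorem~1.1]{philippon1988Lower}'' serving as the entire justification. So your proposal is not an alternative to the paper's argument but rather a sketch of the proof behind a black-box citation. If your goal is to match the paper, a one-line reference to Waldschmidt suffices; if your goal is to supply an independent proof, then what you have is a reasonable high-level plan, but you should be aware that turning it into a rigorous argument with the exact constant $C^m m^{2m}$ and the precise dependence on $E$, $E^*$, $D$ is a substantial undertaking (essentially a chapter of Waldschmidt's monograph), and your proposal, as written, is a roadmap rather than a proof.
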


%
%
%

\begin{lemma}\label{lem:etal}
 Let $h_1,h_2,r_1,r_2\in \mathbb{Z}^+$, $\alpha, \beta$ are positive algebric numbers such that $\frac{\beta}{\alpha}\in \mathbb{\overline{Q}}\setminus \mathbb{Q}$. Denote $M=\max\{h_1,h_2,r_1,r_2\}$. If $h_1^\alpha r_1^\beta-h_2^\alpha r_2^\beta\neq 0$, then 
\begin{equation*}
	|h_1^\alpha r_1^\beta-h_2^\alpha r_2^\beta|> \exp\{-C(\alpha,\beta)(\log M)^{2}\log\log M\},
\end{equation*}
where $C(\alpha,\beta)$ is a positive constant depending only on $\alpha, \beta$.
\end{lemma}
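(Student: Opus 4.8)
The plan is to convert the difference $h_1^\alpha r_1^\beta - h_2^\alpha r_2^\beta$ into a linear form in two logarithms and then invoke the sharp lower bound of Lemma~\ref{lem:loglf}. Write $\mathcal{D} = h_1^\alpha r_1^\beta - h_2^\alpha r_2^\beta$; by the symmetry between the two triples we may assume $h_1^\alpha r_1^\beta \geq h_2^\alpha r_2^\beta$, so that
\[
\mathcal{D} = h_2^\alpha r_2^\beta\big(e^\Lambda - 1\big), \qquad \Lambda = \alpha\log\frac{h_1}{h_2} + \beta\log\frac{r_1}{r_2} \geq 0.
\]
Since $h_2^\alpha r_2^\beta \geq 1$ and $e^\Lambda - 1 \geq \Lambda$ for $\Lambda \geq 0$, we have $|\mathcal{D}| \geq \Lambda = |\Lambda|$, and it suffices to bound the nonzero linear form $\Lambda = \alpha\lambda_1 + \beta\lambda_2$ from below, where $\lambda_1 = \log(h_1/h_2)$ and $\lambda_2 = \log(r_1/r_2)$, each $\lambda_i$ being a logarithm of a nonzero rational $\alpha_i$ of (Weil) height at most $\log M$. (The bound is only nontrivial for $M$ large, which we assume throughout.)

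First I would dispose of the degenerate configurations. If one of $\lambda_1,\lambda_2$ vanishes, say $h_1 = h_2$, then $r_1 \neq r_2$ (otherwise $\mathcal{D} = 0$) and $\Lambda = \beta\log(r_1/r_2)$, whence $|\Lambda| \gg |r_1 - r_2|/M \gg 1/M$. If both are nonzero but linearly dependent over $\mathbb{Q}$, write $q\lambda_1 = p\lambda_2$ with coprime integers $p,q$, $q \geq 1$; comparing prime factorizations in $(h_1/h_2)^q = (r_1/r_2)^p$ shows that $q$ divides every exponent of $r_1/r_2$, so $q \ll \log M$. Then $\Lambda = q^{-1}(\alpha p + \beta q)\lambda_2$, and since $\beta/\alpha \notin \mathbb{Q}$ we have $\alpha p + \beta q \neq 0$; applying Lemma~\ref{lem:AgDp} to the algebraic irrational $\beta/\alpha$ gives $|\alpha p + \beta q| = |\alpha|\,|p + q\beta/\alpha| \geq |\alpha|\,\|q\beta/\alpha\| \gg q^{-1-\varepsilon}$, which combined with $|\lambda_2| \gg 1/M$ yields $|\Lambda| \gg M^{-1}(\log M)^{-2-\varepsilon}$. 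In every such case $|\Lambda|$ exceeds a negative power of $M$, far stronger than the claim.

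The essential case is when $\lambda_1,\lambda_2$ are linearly independent over $\mathbb{Q}$. Then Lemma~\ref{lem:baker} upgrades this to independence over $\overline{\mathbb{Q}}$, so $\Lambda \neq 0$ automatically, and I would apply Lemma~\ref{lem:loglf} with $m = 2$, $\alpha_1 = h_1/h_2$, $\alpha_2 = r_1/r_2$, $\beta_1 = \alpha$, $\beta_2 = \beta$, and $E = e$. Here $D = [\mathbb{Q}(\alpha,\beta):\mathbb{Q}]$ depends only on $\alpha,\beta$, since the $\alpha_i$ are rational. Because $h(\alpha_i) \leq \log M$ and $|\lambda_i| \leq \log M$, the choice $E = e$ gives $\log A_i \ll \log M$; taking $B \asymp \log M$ meets the constraints $B \geq \max\{E^*, \log A_1, \log A_2\}$ and $\log B \geq \max_i h(\beta_i) = O(1)$, so that $\log B \asymp \log\log M$. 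The remaining factors $C^m m^{2m} D^{m+2}(\log E^*)(\log E)^{-m-1}$ are all $O(1)$, and therefore
\[
|\Lambda| \geq \exp\big\{-C(\alpha,\beta)(\log A_1)(\log A_2)(\log B)\big\} \geq \exp\big\{-C(\alpha,\beta)(\log M)^2\log\log M\big\}.
\]
Combining with $|\mathcal{D}| \geq |\Lambda|$ finishes the proof.

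The main obstacle is the parameter bookkeeping in Lemma~\ref{lem:loglf}: the sharp exponent $(\log M)^2\log\log M$ emerges only because the form is organized into exactly the \emph{two} logarithms $\log(h_1/h_2)$ and $\log(r_1/r_2)$. Had I instead expanded into the logarithms of the individual primes dividing $h_1h_2r_1r_2$, their number would grow like $\log M/\log\log M$, and the factors $m^{2m}$ and $\prod_i \log A_i$ would inflate the bound to $\exp\{M^{o(1)}\}$, which is useless. Keeping $m = 2$ while controlling the heights is thus the crux; the degenerate and $\mathbb{Q}$-dependent cases are comparatively routine but genuinely rely on Lemma~\ref{lem:AgDp} and on the irrationality of $\beta/\alpha$.
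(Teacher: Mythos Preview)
Your proof is correct and follows essentially the same route as the paper: reduce $|\mathcal{D}|$ to the linear form $\Lambda=\alpha\log(h_1/h_2)+\beta\log(r_1/r_2)$, dispose of the cases where one logarithm vanishes or the two are $\mathbb{Q}$-dependent via the arithmetic bound $q\ll\log M$ together with Lemma~\ref{lem:AgDp}, and in the independent case apply Lemma~\ref{lem:baker} followed by Lemma~\ref{lem:loglf} with $m=2$, $E=e$, $\log A_i\ll\log M$, $\log B\asymp\log\log M$. The only cosmetic difference is that the paper extracts the bound $p\le(\log M)/\log 2$ by explicitly passing to lowest terms and showing the relevant $p$-th roots are integers, whereas you read off the same bound directly from the $p$-adic valuations; the content is identical.
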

\begin{proof}[Proof of Lemma \ref{lem:etal}]
	Assume that $h_1^\alpha r_1^\beta>h_2^\alpha r_2^\beta$ without loss of generality. Then
	\begin{equation}\label{Dhr0}
		h_1^\alpha r_1^\beta-h_2^\alpha r_2^\beta=h_2^\alpha r_2^\beta\bigg(\Big(\frac{h_1}{h_2}\Big)^\alpha \Big(\frac{r_1}{r_2}\Big)^\beta-1\bigg)>\alpha\log \frac{h_1}{h_2}+\beta\log\frac{r_1}{r_2}>0,
	\end{equation}
	by using $x>\log (1+x)$ for $x>0$.
	
	If $h_1=h_2$, then $r_1>r_2$, and it follows that
	\[ h_1^\alpha r_1^\beta-h_2^\alpha r_2^\beta>\beta\log\frac{r_1}{r_2}.\]
	Since $1\leq r_1\neq r_2\leq M$, we see
	\begin{equation}\label{logl}
		\Big|\log\frac{r_1}{r_2}\Big|\geq \log\Big(1+\frac{1}{M}\Big)\geq  \frac{1}{M} \log2.
	\end{equation}
	Thus
	\[ h_1^\alpha r_1^\beta-h_2^\alpha r_2^\beta>\beta (\log2)M^{-1}.\]
	Similarly, if $r_1=r_2$, then $h_1>h_2$ and 
	\[ h_1^\alpha r_1^\beta-h_2^\alpha r_2^\beta>\alpha(\log2)M^{-1}.\]
	So Lemma \ref{lem:etal} holds for these two cases.
	
	For the remainder of the proof, we assume $h_1\neq h_2$ and $ r_1\neq r_2$. 
	
	If $\log \frac{h_1}{h_2}\big(\log\frac{r_1}{r_2}\big)^{-1}\not\in \mathbb{Q}$, then  from  Lemma \ref{lem:baker}, we see  $\alpha\log \frac{h_1}{h_2}+\beta\log\frac{r_1}{r_2}\neq 0$.
	
	In Lemma \ref{lem:loglf}, let $E=e$.  Noting that $$h\big(\frac{h_1}{h_2}\big)=\max \big\{\log h_1, \log h_2\big\} \text{and} h\big(\frac{r_1}{r_2}\big)=\max \big\{\log r_1, \log r_2\big\},$$  
	we obtain
	\begin{align*}
		&\log A_1= \max \left\{ \log h_1, \log h_2, \frac{e}{D} \Big|\log \frac{h_1}{h_2}\Big|,            \frac{2}{D} \right\}\ll \log M,\\
		&\log A_2= \max \left\{ \log r_1, \log r_2, \frac{e}{D} \Big|\log \frac{r_1}{r_2}\Big|, \frac{2}{D} \right\}\ll \log M.
	\end{align*}
	Choosing $B$ such that
	\[ \log B=\max \left\{ h(\alpha), h(\beta), \log(e+D), \log\log A_1, \log\log A_2\right\}, \]
	we have $\log B\ll\log\log M$. Consequently,
	\[	|\Lambda| =\alpha\log \frac{h_1}{h_2}+\beta\log\frac{r_1}{r_2}>\exp\{-C(\alpha,\beta)(\log M)^{2}\log\log M\}. \]
 From \eqref{Dhr0}, we can derive the  conclusion of Lemma \ref{lem:etal}.
	
	If $\log \frac{h_1}{h_2}\big(\log\frac{r_1}{r_2}\big)^{-1}\in \mathbb{Q}$, then there exists $p\in \mathbb{Z}^+$ and $q\in\mathbb{Z}\setminus\{0\}$ such that 
	$$\log \frac{h_1}{h_2}\big(\log\frac{r_1}{r_2}\big)^{-1}=\frac{q}{p}.$$ 
	By Lemma \ref{lem:AgDp}, we get
	\[ \alpha\log \frac{h_1}{h_2}+\beta\log\frac{r_1}{r_2}=\frac{\alpha}{p}\Big|\log\frac{r_1}{r_2}\Big|\Big|q+\frac{\beta}{\alpha}p\Big|\geq \frac{\alpha}{p}\Big|\log\frac{r_1}{r_2}\Big|c(\frac{\beta}{\alpha},\varepsilon)p^{-1-\varepsilon}. \]
	Combing this with \eqref{Dhr0} and \eqref{logl}, we have 
	\begin{equation}\label{Dhr1}
		h_1^\alpha r_1^\beta-h_2^\alpha r_2^\beta>\alpha(\log 2) c(\frac{\beta}{\alpha},\varepsilon)M^{-1} p^{-2-\varepsilon}. 
	\end{equation}

	Let $h_{10}, h_{20}, r_{10}, r_{20}$ be positive integers such that $(h_{10}, h_{20})=1$, $(r_{10}, r_{20})=1$ and $\frac{h_{10}}{h_{20}}=\frac{h_1}{h_2}$, $\frac{r_{10}}{r_{20}}=\frac{r_1}{r_2}$. Then
	\[ \Big(\frac{h_{10}}{h_{20}}\Big)^p=\Big(\frac{r_{10}}{r_{20}}\Big)^q. \]
	If $q>0$, we see
	 \[ {h_{10}}^p{r_{20}}^q={h_{20}}^p{r_{10}}^q,\]
	from which we deduce that	
	\[ {h_{10}}^p={r_{10}}^q \text{  \ and } \ {h_{20}}^p={r_{20}}^q.\]
	If $q<0$, then	
	\[ {h_{10}}^p{r_{10}}^{|q|}={h_{20}}^p{r_{20}}^{|q|},  \]
	and similarly,
	 \[ {h_{10}}^p={r_{20}}^{|q|} \text{  \ and } {h_{20}}^p={r_{10}}^{|q|}.\]
	By the fundamental theorem of arithmetic, noting that $(p,q)=1$, we get
	\[ {h_{i0}}^\frac{1}{|q|}\in \mathbb{Z}^+\quad\text{and}\quad {r_{i0}}^\frac{1}{p}\in \mathbb{Z}^+ \qquad \text{for } i=1,2.\]
	Since  $\frac{r_{10}}{r_{20}}=\frac{r_1}{r_2}\neq 1$, it follows that $r_{10}\neq r_{20}$. Therefore, \(\max\{r_{10}, r_{20}\}\geq 2\), and consequently,
	\[ \max\big\{{r_{10}}^\frac{1}{p}, {r_{20}}^\frac{1}{p}\big\}\geq 2. \]
	Using the fact that $r_{i0}\leq r_i\leq M$ for $i=1,2$, we obtain
	 $$p \leq \frac{\log M}{\log 2}.$$
	
	Substituting into  \eqref{Dhr1}, we derive that
	\begin{equation*}
		h_1^\alpha r_1^\beta-h_2^\alpha r_2^\beta>\alpha(\log 2)^{3+\varepsilon}\, c(\frac{\beta}{\alpha},\varepsilon)M^{-1} (\log M)^{-2-\varepsilon}. 
	\end{equation*}
	Thus, the conclusion of Lemma \ref{lem:etal} follows, completing the proof for all cases.
\end{proof}

\begin{proof}[Proof of Lemma \ref{lem:S1Hab}] 
	Set
\begin{equation*}
	\Sigma_{a,b}(H_1,H_2,R_1, R_2; \delta)=\sum_{\substack{h_i\sim H_i, r_i\sim R_i, i=1,2;\\ |h_1^ar_1^b-h_2^ar_2^b|\leq \delta}} (h_1h_2)^{-\frac{a+2b}{2(a+b)}}(r_1r_2)^{-\frac{2a+b}{2(a+b)}},
\end{equation*}
where $ H_i\geq \frac{1}{2}$,  $R_i\geq \frac{1}{2}$ for $i=1,2$. 
By using Lemma \ref{lem:Dph} with $\mu=a$ and $\nu=b$, we see
\begin{align*}
	&\quad\ \ \Sigma_{a,b}(H_1,H_2,R_1, R_2; \delta)\ll (H_1H_2)^{-\frac{a+2b}{2(a+b)}}(R_1R_2)^{-\frac{2a+b}{2(a+b)}}\mathcal{A}(H_1, H_2, R_1,R_2; \delta)\\
	&\ll (H_1H_2)^{-\frac{a+2b}{2(a+b)}}(R_1R_2)^{-\frac{2a+b}{2(a+b)}}\big(\delta(H_1H_2)^{1-\frac{a}{2}}(R_1R_2)^{1-\frac{b}{2}}+(H_1H_2R_1R_2)^\frac{1}{2}\log^2( 2H_1H_2R_1R_2)\big).
\end{align*}
Let $\delta \to 0$, we get
\begin{equation*}
	\begin{split}
		\Sigma_{a,b}(H_1,H_2,R_1, R_2; 0)&=\sum_{\substack{h_i\sim H_i, r_i\sim R_i, i=1,2;\\ h_1^ar_1^b=h_2^ar_2^b}} (h_1h_2)^{-\frac{a+2b}{2(a+b)}}(r_1r_2)^{-\frac{2a+b}{2(a+b)}}\\
		&\ll (H_1H_2)^{-\frac{b}{2(a+b)}}(R_1R_2)^{-\frac{a}{2(a+b)}}\log^2( 2H_1H_2R_1R_2).
	\end{split}
\end{equation*}
Hence,
	\begin{equation*}
	\Sigma_1(H;a,b)=\sum_{j_1=0}^{\infty}\sum_{j_2=-1}^{\infty}\sum_{j_3=-1}^{\infty}\sum_{j_4=-1}^{\infty}\Sigma_{a,b}(2^{j_1}H, 2^{j_2}, 2^{j_3},2^{j_4}; 0)\ll H^{-\frac{b}{2(a+b)}}\log^2(2H),
\end{equation*}
which is  Lemma  \ref{lem:S1Hab}.
\end{proof}
\begin{proof}[Proof of of Lemma \ref{lem:Sab2}]
	We first consider (I) and suppose that $\frac{a}{b}\in \mathbb{\overline{Q}}\setminus \mathbb{Q}$. Let
	\begin{equation*}
		\begin{split}
			U_{a,b}(T; H_1, H_2, R_1,R_2)=\!\!\sum_{\substack{h_i\sim H_i, r_i\sim R_i, i=1,2;\\{\small 0<|\eta(h_1,r_1,h_2,r_2)|<\frac{1}{10}(h_1^ar_1^bh_2^ar_2^b)^\frac{1}{2(a+b)}}}}\!\!(h_1h_2)^{-\frac{a+2b}{2(a+b)}}(r_1r_2)^{-\frac{2a+b}{2(a+b)}}&\\
			\qquad\times\min\big(T^\frac{1}{a+b},\frac{1}{|\eta(h_1,r_1,h_2,r_2)|}\big),&
		\end{split}	
	\end{equation*}
	with $\frac{1}{2}\leq H_i\leq H$,  $\frac{1}{2}\leq R_i\leq R$ ($i=1,2$). The condition $|\eta(h_1,r_1,h_2,r_2)|<\frac{1}{10}(h_1^ar_1^bh_2^ar_2^b)^\frac{1}{2(a+b)}$ gives 
	\begin{equation}\label{asymphr}
		H_1^aR_1^b\asymp H_2^aR_2^b.
	\end{equation}
	Let
	\begin{equation}\label{Uab}
		U_{a,b}(T; H_1, H_2, R_1,R_2)=U_1+U_2,
	\end{equation}
	where $U_1$ and $U_2$ represent the contributions corresponding to the cases $T^\frac{1}{a+b}<\frac{1}{|\eta(h_1,r_1,h_2,r_2)|}$ and $T^\frac{1}{a+b}\geq\frac{1}{|\eta(h_1,r_1,h_2,r_2)|}$, respectively, i.e.
	\begin{align*}
		U_1&=\sum_{\substack{h_i\sim H_i, r_i\sim R_i, i=1,2;\\{\small 0<|\eta(h_1,r_1,h_2,r_2)|<T^{-\frac{1}{a+b}}}}}(h_1h_2)^{-\frac{a+2b}{2(a+b)}}(r_1r_2)^{-\frac{2a+b}{2(a+b)}}T^\frac{1}{a+b},\\
		U_2&\ll \!\!\sum_{\substack{h_i\sim H_i, r_i\sim R_i, i=1,2;\\{\small T^{-\frac{1}{a+b}}\leq |\eta(h_1,r_1,h_2,r_2)|<\frac{1}{5}(H_1^aR_1^bH_2^aR_2^b)^\frac{1}{2(a+b)}}}}\!\!\!(h_1h_2)^{-\frac{a+2b}{2(a+b)}}(r_1r_2)^{-\frac{2a+b}{2(a+b)}}\frac{1}{|\eta(h_1,r_1,h_2,r_2)|}.
	\end{align*}
	
	Applying Lemma \ref{lem:Dph} with $\mu=\frac{a}{a+b}$ and $\nu=\frac{b}{a+b}$, we get
	\begin{equation}\label{u11}
		\begin{split}
		   U_1&\ll T^{\frac{1}{a+b}}(H_1H_2)^{-\frac{a+2b}{2(a+b)}}(R_1R_2)^{-\frac{2a+b}{2(a+b)}}\sum_{\substack{h_i\sim H_i, r_i\sim R_i, i=1,2;\\{\small 0<|\eta(h_1,r_1,h_2,r_2)|<T^{-\frac{1}{a+b}}}}}1\\
	   &\ll 1+T^{\frac{1}{a+b}}(H_1H_2)^{-\frac{b}{2(a+b)}}(R_1R_2)^{-\frac{a}{2(a+b)}}\mathcal{L}^2
\\
		   &\ll 1+T^{\frac{1}{a+b}}(H^a_1R_1^b)^{-\frac{a}{2b(a+b)}}(H^a_2R^b_2)^{-\frac{a}{2b(a+b)}}(H_1H_2)^{-\frac{b-a}{2b}}\mathcal{L}^2 \\
		   &\ll 1+T^{\frac{1}{a+b}}(H^a_1R_1^b)^{-\frac{a}{b(a+b)}}(H_1H_2)^{-\frac{b-a}{2b}}\mathcal{L}^2,
		\end{split}
	\end{equation}
	where we used  \eqref{asymphr} in the last step.
	
	Let $M=\max\{H_1, H_2, R_1, R_2\}$. By using Lemma \ref{lem:etal}, noting that  $\log H_i\ll\log T$, $\log R_i \ll\log T$ for i=1,2,  we get
	\begin{equation}\label{etal}
		|\eta(h_1,r_1,h_2,r_2)|>\exp\{-c_3(a,b)(\log M)^{2}\log\log T\},
	\end{equation}
	where $c_3(a,b)$ is a positive constant depending on $a, b$. Therefore, the condition
	 $$|\eta(h_1,r_1,h_2,r_2)|<T^{-\frac{1}{a+b}}$$  
	 suggests that 
	\[T^{\frac{1}{a+b}} < |\eta(h_1,r_1,h_2,r_2)|^{-1}<\exp\{c_3(a,b)(\log M)^{2}\log\log T\},\]
	which leads to
	\[\log T<(a+b)c_3(a,b)(\log M)^{2}\log\log T.\]
	Rearranging, we obtain
	\[(\log M)^{2}>(a+b)^{-1}(c_3(a,b))^{-1}\log T(\log\log T)^{-1},\]
	or equivalently,
	\begin{equation*}\label{mlb}
		M>\exp\{(a+b)^{-\frac{1}{2}}(c_3(a,b))^{-\frac{1}{2}}(\log T)^\frac{1}{2}(\log\log T)^{-\frac{1}{2}}\}.
	\end{equation*}
	Hence,  from  \eqref{u11}, we deduce that
	\begin{equation}\label{u1}
		\begin{split}
				U_1&\ll  1+T^{\frac{1}{a+b}}M^{-\frac{a}{a+b}}\mathcal{L}^2\\
				&\ll T^{\frac{1}{a+b}}\mathcal{L}^2\exp\{-a(a+b)^{-\frac{3}{2}}(c_3(a,b))^{-\frac{1}{2}}(\log T)^\frac{1}{2}(\log\log T)^{-\frac{1}{2}}\}.
		\end{split}
	\end{equation}
	
	Now we estimate $U_2$. 
	By using \eqref{etal},  the condition for $U_2$ can be written as
	\[ \max\big\{T^{-\frac{1}{a+b}}, \exp\{-c_3(a,b)(\log M)^{2}\log\log T\}\big\}\ll |\eta(h_1,r_1,h_2,r_2)|<\frac{1}{5}(H_1^aR_1^bH_2^aR_2^b)^\frac{1}{2(a+b)}. \]
	Let $\Delta=\frac{1}{5}(H_1^aR_1^bH_2^aR_2^b)^\frac{1}{2(a+b)}.$ We split the range of $|\eta(h_1,r_1,h_2,r_2)|$ into subintervals of the form $(2^{-(j+1)}\Delta, 2^{-j}\Delta] \text{ for } 0\leq j<J_0$, where $J_0$ satisfies that 
	\[ 2^{-J_0}\Delta\asymp \max\big\{T^{-\frac{1}{a+b}}, \exp\{-c_3(a,b)(\log M)^{2}\log\log T\}\big\}, \]
	which implies
	\begin{equation}\label{J0}
		2^{J_0}\Delta^{-1}\asymp \min\big\{T^{\frac{1}{a+b}}, \exp\{c_3(a,b)(\log M)^{2}\log\log T\}\big\} \text{\qquad and\qquad }J_0\ll \mathcal{L}^3.
	\end{equation}
	Then, 
	\begin{align*}
		\notag	U_2&\ll (H_1H_2)^{-\frac{a+2b}{2(a+b)}}(R_1R_2)^{-\frac{2a+b}{2(a+b)}}\sum_{j=0}^{J_0} \sum_{|\eta(h_1,r_1,h_2,r_2)|\sim 2^{-(j+1)}\Delta}\frac{1}{|\eta(h_1,r_1,h_2,r_2)|}\\
		\notag	&\ll (H_1H_2)^{-\frac{a+2b}{2(a+b)}}(R_1R_2)^{-\frac{2a+b}{2(a+b)}}\sum_{j=0}^{J_0}2^{j+1}\Delta^{-1} \mathcal{A}(H_1, H_2, R_1,R_2; 2^{-j}\Delta).
	\end{align*}
	By using Lemma \ref{lem:Dph}, with $\mu=\frac{a}{a+b}$ and $\nu=\frac{b}{a+b}$, we see
	\begin{equation*}
		\mathcal{A}(H_1, H_2, R_1,R_2; 2^{-j}\Delta)\ll2^{-j}\Delta(H_1H_2)^\frac{a+2b}{2(a+b)}(R_1R_2)^\frac{2a+b}{2(a+b)}+(H_1H_2R_1R_2)^\frac{1}{2}\mathcal{L}^2.
	\end{equation*}
	Substituting into the above inequality, by \eqref{asymphr} and \eqref{J0}, we get
	\begin{equation}\label{u20}
	\begin{split}
		U_2&\ll 2(J_0+1)+(H_1H_2)^{-\frac{b}{2(a+b)}}(R_1R_2)^{-\frac{a}{2(a+b)}}\Delta^{-1}(2^{J_0+2}-2)\mathcal{L}^2\\
		&\ll\mathcal{L}^3+(H_1H_2)^{-\frac{b}{2(a+b)}}(R_1R_2)^{-\frac{a}{2(a+b)}}\mathcal{L}^2 \min\big\{T^{\frac{1}{a+b}}, \exp\{c_3(a,b)(\log M)^{2}\log\log T\}\big\}\\
		&\ll\mathcal{L}^3+(H_1H_2)^{-\frac{b-a}{2b}}(H_1^aR_1^b)^{-\frac{a}{b(a+b)}}\mathcal{L}^2 \min\big\{T^{\frac{1}{a+b}}, \exp\{c_3(a,b)(\log M)^{2}\log\log T\}\big\}.
	\end{split}
	\end{equation}
	 Denote $M=\max\{H_1, H_2, R_1, R_2\}$ as before. Then
	\begin{equation}\label{u21}
		U_2\ll \mathcal{L}^3+M^{-\frac{a}{a+b}}\mathcal{L}^2 \min\big\{T^{\frac{1}{a+b}}, \exp\{c_3(a,b)(\log M)^{2}\log\log T\}\big\}.
	\end{equation}

If $T^{\frac{1}{a+b}}\leq \exp\{c_3(a,b)(\log M)^{2}\log\log T\}$, then
		\begin{equation*}
		M\geq \exp\{(a+b)^{-\frac{1}{2}}(c_3(a,b))^{-\frac{1}{2}}(\log T)^\frac{1}{2}(\log\log T)^{-\frac{1}{2}}\}.
    	\end{equation*}
	Thus,
		\begin{equation*}
		U_2\ll T^{\frac{1}{a+b}}\mathcal{L}^2\exp\{-a(a+b)^{-\frac{3}{2}}(c_3(a,b))^{-\frac{1}{2}}(\log T)^\frac{1}{2}(\log\log T)^{-\frac{1}{2}}\}.
	\end{equation*}
	
	If $T^{\frac{1}{a+b}}> \exp\{c_3(a,b)(\log M)^{2}\log\log T\}$, then
	\[ M<\exp\{(a+b)^{-\frac{1}{2}}(c_3(a,b))^{-\frac{1}{2}}(\log T)^\frac{1}{2}(\log\log T)^{-\frac{1}{2}}\}. \]
	The right hand of \eqref{u21}  increases as $M$  increases,  so we still have
	\begin{equation*}
		U_2\ll T^{\frac{1}{a+b}}\mathcal{L}^2\exp\{-a(a+b)^{-\frac{3}{2}}(c_3(a,b))^{-\frac{1}{2}}(\log T)^\frac{1}{2}(\log\log T)^{-\frac{1}{2}}\}.
	\end{equation*}
	
	Combining these two cases for $U_2$ and the upper bound \eqref{u1} for $U_1$, we obtain from \eqref{Uab} that
	\begin{equation*}\label{Uabu}
		U_{a,b}(T; H_1, H_2, R_1,R_2)\ll T^{\frac{1}{a+b}}\mathcal{L}^2\exp\{-a(a+b)^{-\frac{3}{2}}(c_3(a,b))^{-\frac{1}{2}}(\log T)^\frac{1}{2}(\log\log T)^{-\frac{1}{2}}\}.
	\end{equation*}
	By a splitting argument, we deduce that
	\begin{equation*}
	\Sigma_2(T; H,R, a,b)
	\ll T^{\frac{1}{a+b}}\mathcal{L}^6\exp\{-a(a+b)^{-\frac{3}{2}}(c_3(a,b))^{-\frac{1}{2}}(\log T)^\frac{1}{2}(\log\log T)^{-\frac{1}{2}}\}.
	\end{equation*}
	A similar estimate can be obtained for $\Sigma_2(T; H,R, b, a)$. Thus we get part (I) of Lemma \ref{lem:Sab2}.
	
	We now prove part (II) and suppose $a, b\in  \mathbb{Z}^+$ with $b\geq a$ and $(a,b)=1$. The proof follows a similar structure to part (I), with the key difference being the substitution of a new lower bound for $|\eta(h_1,r_1,h_2,r_2)|$, as the estimate differs in this case.
	
	By using the mean value theorem and \eqref{asymphr}, we have
	\[|\eta(h_1,r_1,h_2,r_2)|\asymp\frac{1}{a+b}(h_1^ar_1^b)^{\frac{1}{a+b}-1}|h_1^ar_1^b-h_2^ar_2^b|\gg (H_1^aR_1^b)^{\frac{1}{a+b}-1}. \]
	Using this estimate to replace \eqref{etal}, we see from the condition for $U_1$ that
	\[ T^\frac{1}{a+b}\ll(H_1^aR_1^b)^{1-\frac{1}{a+b}},\]
	which gives
	\[ H_1^aR_1^b\gg T^\frac{1}{a+b-1}.\]
	From \eqref{u11}, we obtain
	\begin{equation*}
		U_1\ll T^{\frac{1}{a+b}-\frac{a}{b(a+b)(a+b-1)}}(H_1H_2)^{-\frac{b-a}{2b}}\mathcal{L}^2.
	\end{equation*}
	
	For $U_2$,  the condition for $\eta(h_1,r_1,h_2,r_2)$  can be written as
	\[ \max\big\{T^{-\frac{1}{a+b}}, (H_1^aR_1^b)^{\frac{1}{a+b}-1}\big\}\ll |\eta(h_1,r_1,h_2,r_2)|<\frac{1}{5}(H_1^aR_1^bH_2^aR_2^b)^\frac{1}{2(a+b)}.\]
	Then, \eqref{u20} can be replaced by
		\begin{equation*}
		U_2=\mathcal{L}+(H_1H_2)^{-\frac{b-a}{2b}}(H_1^aR_1^bH_2^aR_2^b)^{-\frac{a}{2b(a+b)}}\mathcal{L}^2\min\big\{T^{\frac{1}{a+b}}, (H_1^aR_1^b)^{1-\frac{1}{a+b}}\big\},
    	\end{equation*}
	which yields
		\begin{align*}
		\notag	U_2&\ll\mathcal{L}+(H_1H_2)^{-\frac{b-a}{2b}}\mathcal{L}^2\min\big\{T^{\frac{1}{a+b}}(H_1^aR_1^b)^{-\frac{a}{b(a+b)}}, (H_1^aR_1^b)^{1-\frac{1}{b}}\big\}\\
		\notag	&\ll\mathcal{L}+(H_1H_2)^{-\frac{b-a}{2b}}\mathcal{L}^2\big(T^{\frac{1}{a+b}}(H_1^aR_1^b)^{-\frac{a}{b(a+b)}}\big)^{1-\frac{a}{b(a+b-1)}}(H_1^aR_1^b)^{(1-\frac{1}{b})\cdot\frac{a}{b(a+b-1)}}\\
		\notag	&\ll\mathcal{L}+(H_1H_2)^{-\frac{b-a}{2b}}T^{\frac{1}{a+b}-\frac{a}{b(a+b)(a+b-1)}}\mathcal{L}^2.
	\end{align*}
	Thus part (II) follows by a splitting argument, and we complete the proof of Lemma \ref{lem:Sab2}.
\end{proof}

\section{ Proof of  Theorem \ref{thm:meansquare_i}}

In this section, we shall prove Theorem \ref{thm:meansquare_i} and consider the integral $\int_{T}^{T+T_0}\Delta^2_{a,b}(x)dx$, where $T\geq 10$ a large parameter, $0<T_0\leq T$ and $\dfrac{a}{b}\in \mathbb{\overline{Q}}\setminus \mathbb{Q}$.

\subsection{ Mean-square of $\Delta^*_{a,b}(x; H)$}\label{ED*}\ 

We first evaluate the integral $\int_{T}^{T+T_0}\Delta^{*2}_{a,b}(x;H)dx$, which provides the main term in the asymptotic formula of $ \int_{T}^{T+T_0}\Delta^2_{a,b}(x)dx$.

 From the expression of $\Delta^*_{a,b}(x, H)$ given by (\ref{Dlt*}), using the product-to-sum trigonometric formula 
 \begin{equation}\label{cos}
 	\cos a_1\cos a_2=\frac{1}{2}\cos(a_1- a_2)+\frac{1}{2}\cos(a_1+a_2),
 \end{equation}
 we have
 \begin{equation}\label{D*2}
 	\int_{T}^{T+T_0}\Delta^{*2}_{a,b}(x;H)dx=S_1+S_2+S_3,
 \end{equation}
 where
 \begin{align*}
 	S_1&=\frac{c_1^2(a,b)}{2\pi^2}\sum_{\substack{1\leq h_1,h_2\leq aH\\1\leq r_1,r_2\leq bH\\ h_1^ar_1^b=h_2^ar_2^b}} (h_1h_2)^{-\frac{a+2b}{2(a+b)}}(r_1r_2)^{-\frac{2a+b}{2(a+b)}}\int_{T}^{T+T_0}x^\frac{1}{a+b}dx,\\
 	S_2&=\frac{c_1^2(a,b)}{2\pi^2}\sum_{\substack{1\leq h_1,h_2\leq aH\\1\leq r_1,r_2\leq bH\\ h_1^ar_1^b\neq h_2^ar_2^b}} (h_1h_2)^{-\frac{a+2b}{2(a+b)}}(r_1r_2)^{-\frac{2a+b}{2(a+b)}}\\
 	&\qquad\qquad\times \int_{T}^{T+T_0}x^\frac{1}{a+b}\cos\Big(2\pi c_2(a,b)x^\frac{1}{a+b}\big((h_1^ar_1^b)^\frac{1}{a+b}-(h_2^ar_2^b)^\frac{1}{a+b}\big)\Big)dx,\\
 	S_3&=\frac{c_1^2(a,b)}{2\pi^2}\sum_{\substack{1\leq h_1,h_2\leq aH\\1\leq r_1,r_2\leq bH}} (h_1h_2)^{-\frac{a+2b}{2(a+b)}}(r_1r_2)^{-\frac{2a+b}{2(a+b)}}\\
 	&\qquad\qquad\times \int_{T}^{T+T_0}x^\frac{1}{a+b}\sin\Big(2\pi c_2(a,b)x^\frac{1}{a+b}\big((h_1^ar_1^b)^\frac{1}{a+b}+(h_2^ar_2^b)^\frac{1}{a+b}\big)\Big)dx.
 \end{align*}
 
 Using Lemma \ref{lem:S1Hab}, we get
 \begin{equation}\label{is1}
 	S_1(x)=\frac{c_1^2(a,b)}{2\pi^2}G_{a,b}\int_{T}^{T+T_0}x^\frac{1}{a+b}dx+O\big(T^{1+\frac{1}{a+b}}H^{-\frac{a}{2(a+b)}}\mathcal{L}^2\big).
 \end{equation}
 By the first derivative test, we obtain
 \begin{equation}\label{is3}
 	\begin{split}
 			S_3(x)&\ll\sum_{\substack{1\leq h_1,h_2\leq aH\\1\leq r_1,r_2\leq bH}}(h_1h_2)^{-\frac{a+2b}{2(a+b)}}(r_1r_2)^{-\frac{2a+b}{2(a+b)}}\frac{T}{(h_1^ar_1^b)^\frac{1}{a+b}+(h_2^ar_2^b)^\frac{1}{a+b}}\\
 		&\ll T\sum_{\substack{1\leq h_1,h_2\leq aH\\1\leq r_1,r_2\leq bH}}(h_1h_2)^{-\frac{a+2b}{2(a+b)}}(r_1r_2)^{-\frac{2a+b}{2(a+b)}}\frac{1}{(h_1^ar_1^bh_2^ar_2^b)^\frac{1}{2(a+b)}}\\
 		&\ll T\mathcal{L}^4,
 	\end{split}
 \end{equation}
 where we used the inequality $a^2_1+a^2_2\geq 2a_1a_2$ in the second step. 
 
 Now we consider the contribution of $S_2(x)$. Write
 \begin{equation}\label{s2}
 	S_2(x)=S_{21}(x)+S_{22}(x),
 \end{equation}
 with
 \begin{multline*}
 	S_{21}(x)=\frac{c_1^2(a,b)}{2\pi^2}\sum_{\substack{1\leq h_1,h_2\leq aH\\1\leq r_1,r_2\leq bH\\|(h_1^ar_1^b)^\frac{1}{a+b}-(h_2^ar_2^b)^\frac{1}{a+b}|\geq\frac{1}{10} (h_1^ar_1^bh_2^ar_2^b)^\frac{1}{2(a+b)}}} (h_1h_2)^{-\frac{a+2b}{2(a+b)}}(r_1r_2)^{-\frac{2a+b}{2(a+b)}}\\
 	\times \int_{T}^{T+T_0}x^\frac{1}{a+b}\cos\Big(2\pi c_2(a,b)x^\frac{1}{a+b}\big((h_1^ar_1^b)^\frac{1}{a+b}-(h_2^ar_2^b)^\frac{1}{a+b}\big)\Big)dx,
 \end{multline*}
 \begin{multline*}
	S_{22}(x)=\frac{c_1^2(a,b)}{2\pi^2}
\sum_{\substack{1\leq h_1,h_2\leq aH\\1\leq r_1,r_2\leq bH\\0<|(h_1^ar_1^b)^\frac{1}{a+b}-(h_2^ar_2^b)^\frac{1}{a+b}|<\frac{1}{10} (h_1^ar_1^bh_2^ar_2^b)^\frac{1}{2(a+b)}}}(h_1h_2)^{-\frac{a+2b}{2(a+b)}}(r_1r_2)^{-\frac{2a+b}{2(a+b)}}\\
	\times \int_{T}^{T+T_0}x^\frac{1}{a+b}\cos\Big(2\pi c_2(a,b)x^\frac{1}{a+b}\big((h_1^ar_1^b)^\frac{1}{a+b}-(h_2^ar_2^b)^\frac{1}{a+b}\big)\Big)dx.
\end{multline*}
By a similar argument as for $S_3(x)$, with $|(h_1^ar_1^b)^\frac{1}{a+b}-(h_2^ar_2^b)^\frac{1}{a+b}|\geq\frac{1}{10} (h_1^ar_1^bh_2^ar_2^b)^\frac{1}{2(a+b)}$ taking place of $|(h_1^ar_1^b)^\frac{1}{a+b}+(h_2^ar_2^b)^\frac{1}{a+b}|\geq 2 (h_1^ar_1^bh_2^ar_2^b)^\frac{1}{2(a+b)}$ in \eqref{is3}, we obtain that
 \begin{equation*}
	S_{21}(x)\ll T\mathcal{L}^4.
\end{equation*}
For $S_{22}$, by the first derivative test and Lemma \ref{lem:Sab2} (I), we get
\begin{align*}
	\ S_{22}(x)	
	\ll T ~\Sigma_2(T; aH,bH, a,b)
	\ll T^{1+\frac{1}{a+b}}\exp\big\{-c_4(a,b)(\log T)^\frac{1}{2}(\log\log T)^{-\frac{1}{2}}\big\}.
\end{align*}
Substituting into \eqref{s2}, we see
 \begin{equation*}
 	S_{2}(x)\ll T^{1+\frac{1}{a+b}}\exp\big\{-c_4(a,b)(\log T)^\frac{1}{2}(\log\log T)^{-\frac{1}{2}}\big\}.
 \end{equation*}
 
 Combining this with \eqref{D*2}, \eqref{is1} and \eqref{is3}, we arrive at
 \begin{multline}\label{iD*}
 	\int_{T}^{T+T_0}\Delta^{*2}_{a,b}(x;z)dx=\frac{c_1^2(a,b)}{2\pi^2}G_{a,b}\int_{T}^{T+T_0}x^\frac{1}{a+b}dx\\
 	+O\Big(T^{1+\frac{1}{a+b}}H^{-\frac{a}{2(a+b)}}\mathcal{L}^2+T^{1+\frac{1}{a+b}}\exp\big\{-c_4(a,b)(\log T)^\frac{1}{2}(\log\log T)^{-\frac{1}{2}}\big\}\Big).
 \end{multline}

\subsection{ Mean value of $\Delta^*_{a,b}(x, H)(\mathcal{R}_1(a,b; x)+\mathcal{R}_1(b,a; x))$}\label{sec:dr}\ 

From the expressions of $\Delta^*_{a,b}(x, H)$ and $\mathcal{R}_1(a,b; x)$ given by
\eqref{R1ab} and \eqref{Dlt*}, we have
\begin{multline*}
	\Delta^*_{a,b}(x, H)\mathcal{R}_1(a,b; x)=\frac{c^2_1(a,b)}{\pi^2}x^\frac{1}{a+b}\sum_{\substack{1\leq h_1,h_2\leq aH,1\leq r_1\leq bH\\bH<r_2\ll h_2c^{\frac{a+b}{a}J}}}\!\! (h_1h_2)^{-\frac{a+2b}{2(a+b)}}(r_1r_2)^{-\frac{2a+b}{2(a+b)}}\\
	\times\cos\Big(2\pi c_2(a,b)x^\frac{1}{a+b}(h_1^ar_1^b)^\frac{1}{a+b}-\frac{\pi}{4}\Big)	 
	 \cos\Big(2\pi c_2(a,b)x^\frac{1}{a+b}(h_2^ar_2^b)^\frac{1}{a+b}-\frac{\pi}{4}\Big).	 
\end{multline*}
By the product-to-sum trigonometric formula  \eqref{cos} and the first derivative test, we get
\begin{equation*}
	\int_{T}^{T+T_0}\Delta^*_{a,b}(x, H)\mathcal{R}_1(a,b; x)dx\ll S_4+S_5,
\end{equation*}
where
\begin{align*}
	S_4&=T^{1+\frac{1}{a+b}}\sum_{\substack{1\leq h_1,h_2\leq aH,1\leq r_1\leq bH\\bH<r_2\ll h_2c^{\frac{a+b}{a}J}\\ h_1^ar_1^b=h_2^ar_2^b }}\!\! (h_1h_2)^{-\frac{a+2b}{2(a+b)}}(r_1r_2)^{-\frac{2a+b}{2(a+b)}},\\
	S_5&=T\sum_{\substack{1\leq h_1,h_2\leq aH,1\leq r_1\leq bH\\bH<r_2\ll h_2c^{\frac{a+b}{a}J}\\ h_1^ar_1^b\neq h_2^ar_2^b }}\!\!(h_1h_2)^{-\frac{a+2b}{2(a+b)}}(r_1r_2)^{-\frac{2a+b}{2(a+b)}}\min\Big(T^\frac{1}{a+b},\frac{1}{|(h_1^ar_1^b)^\frac{1}{a+b}-(h_2^ar_2^b)^\frac{1}{a+b}|}\Big).
\end{align*}
Using Lemma \ref{lem:S1Hab}, we see
\[ S_4\ll T^{1+\frac{1}{a+b}}H^{-\frac{a}{2(a+b)}}\mathcal{L}^2.\]
For $S_5(x)$,  using a similar approach to that of  $S_2(x)$ in subsection \ref{ED*} and noting that $\log(h_2c^{\frac{a+b}{a}J})\ll \log T$, we obtain
\begin{equation*}
	S_5 \ll T^{1+\frac{1}{a+b}}\exp\big\{-c_4(a,b)(\log T)^\frac{1}{2}(\log\log T)^{-\frac{1}{2}}\big\}.
\end{equation*}

Hence,
\begin{equation*}
	\int_{T}^{T+T_0}\Delta^*_{a,b}(x, H)\mathcal{R}_1(a,b; x)dx\ll T^{1+\frac{1}{a+b}}\Big(H^{-\frac{a}{2(a+b)}}\mathcal{L}^2+\exp\big\{-c_4(a,b)(\log T)^\frac{1}{2}(\log\log T)^{-\frac{1}{2}}\big\}\Big).
\end{equation*}
The same estimate holds for $\int_{T}^{T+T_0}\Delta^*_{a,b}(x, H)\mathcal{R}_1(b,a; x)dx$. Thus,
\begin{multline}\label{idr}
		\int_{T}^{T+T_0}\Delta^*_{a,b}(x, H)\big(\mathcal{R}_1(a,b; x)+\mathcal{R}_1(b,a; x)\big)dx\\
		 \ll T^{1+\frac{1}{a+b}}\Big(H^{-\frac{a}{2(a+b)}}\mathcal{L}^2+\exp\big\{-c_4(a,b)(\log T)^\frac{1}{2}(\log\log T)^{-\frac{1}{2}}\big\}\Big).
\end{multline}

\subsection{ Mean-square of $\mathcal{R}_1(a,b; x)+\mathcal{R}_1(b,a; x)$}\ 

From the expression of $\mathcal{R}_1(a,b; x)$ given by \eqref{R1ab}, we have
\begin{multline*}
	\mathcal{R}_1^2(a,b;x)\ll x^\frac{1}{a+b}\sum_{\substack{1\leq h_1,h_2\leq aH,\\bH<r_1,r_2\ll Hc^{\frac{a+b}{a}J}}}\!\! (h_1h_2)^{-\frac{a+2b}{2(a+b)}}(r_1r_2)^{-\frac{2a+b}{2(a+b)}}\\
	\times\cos\Big(2\pi c_2(a,b)x^\frac{1}{a+b}(h_1^ar_1^b)^\frac{1}{a+b}-\frac{\pi}{4}\Big)	 
	\cos\Big(2\pi c_2(a,b)x^\frac{1}{a+b}(h_2^ar_2^b)^\frac{1}{a+b}-\frac{\pi}{4}\Big).
\end{multline*}
By  \eqref{cos} and the first derivative test, we see
\begin{equation*}
			\int_{T}^{T+T_0}\mathcal{R}_1^2(a,b;x)dx\ll S_6(x)+S_7(x),
\end{equation*}
where
\begin{align*}
	S_6&=T^{1+\frac{1}{a+b}}\sum_{\substack{1\leq h_1,h_2\leq aH\\bH<r_1,r_2\ll Hc^{\frac{a+b}{a}J}\\ h_1^ar_1^b=h_2^ar_2^b }}\!\! (h_1h_2)^{-\frac{a+2b}{2(a+b)}}(r_1r_2)^{-\frac{2a+b}{2(a+b)}},\\
	S_7&=T\sum_{\substack{1\leq h_1,h_2\leq aH\\bH<r_1,r_2\ll Hc^{\frac{a+b}{a}J}\\ h_1^ar_1^b\neq h_2^ar_2^b }}\!\!(h_1h_2)^{-\frac{a+2b}{2(a+b)}}(r_1r_2)^{-\frac{2a+b}{2(a+b)}}\min\Big(T^\frac{1}{a+b},\frac{1}{|(h_1^ar_1^b)^\frac{1}{a+b}-(h_2^ar_2^b)^\frac{1}{a+b}|}\Big).
\end{align*}
Using Lemma \ref{lem:S1Hab}, we get
\begin{equation*}
   S_6\ll T^{1+\frac{1}{a+b}}H^{-\frac{a}{2(a+b)}}\mathcal{L}^2.
\end{equation*}
A similar arguement as for $S_5$ in subsection \ref{sec:dr} gives
\begin{equation*}
	S_7\ll T^{1+\frac{1}{a+b}}\exp\big\{-c_4(a,b)(\log T)^\frac{1}{2}(\log\log T)^{-\frac{1}{2}}\big\}.
\end{equation*}
Hence,
\begin{equation*}
	\int_{T}^{T+T_0}\mathcal{R}_1^2(a,b;x)dx\ll T^{1+\frac{1}{a+b}}\Big(H^{-\frac{a}{2(a+b)}}\mathcal{L}^2+\exp\big\{-c_4(a,b)(\log T)^\frac{1}{2}(\log\log T)^{-\frac{1}{2}}\big\}\Big).
\end{equation*}

Similarly, we can get the same estimate for $\int_{T}^{T+T_0}\mathcal{R}_1^2(b,a;x)dx$. Therefore,
\begin{multline}\label{ir1}
		\int_{T}^{T+T_0}\big(\mathcal{R}_1(a,b; x)+\mathcal{R}_1(b,a; x)\big)^2dx\\
		\ll   T^{1+\frac{1}{a+b}}\Big(H^{-\frac{a}{2(a+b)}}\mathcal{L}^2+\exp\big\{-c_4(a,b)(\log T)^\frac{1}{2}(\log\log T)^{-\frac{1}{2}}\big\}\Big).
\end{multline}

\subsection{ Mean-square of $G(a,b;x)+G(b,a;x)$}\ 

From the expression of $G(a,b;x)$ \eqref{Gab}, we have
\begin{align*}
		\int_{T}^{T+T_0}G(a,b;x)dx&\ll \sum_{m\leq 2T^\frac{1}{a+b}}\int_{T}^{T+T_0}\min\Big(1,\frac{1}{H\Big\|\frac{x^\frac{1}{a}}{m^\frac{b}{a}}\Big\|}\Big)dx\\
		&\ll \sum_{m\leq 2T^\frac{1}{a+b}}\int_{(\frac{T}{m^b})^\frac{1}{a}}^{(\frac{2T}{m^b})^\frac{1}{a}}\min\big(1,\frac{1}{H\|u\|}\big)m^bu^{a-1}du\\
		&\ll \sum_{m\leq 2T^\frac{1}{a+b}}m^b\Big(\frac{T}{m^b}\Big)^\frac{a-1}{a}\Big(\frac{T}{m^b}\Big)^\frac{1}{a}\int_{0}^{1}\min\big(1,\frac{1}{H\|u\|}\big)du\\
		&\ll T\sum_{m\leq 2T^\frac{1}{a+b}}\int_{0}^{\frac{1}{2}}\min\big(1,\frac{1}{H\|u\|}\big)du\\
		&\ll T^{1+\frac{1}{a+b}}\Big(\int_{0}^{\frac{1}{H}}du+\int_{\frac{1}{H}}^{\frac{1}{2}}\frac{1}{Hu}du\Big)\\
		&\ll T^{1+\frac{1}{a+b}}H^{-1}\mathcal{L}.
\end{align*}
Combining with the trivial bound $G(a,b;x)\ll T^{\frac{1}{a+b}}$, we get 
\begin{equation*}
	\int_{T}^{T+T_0}G^2(a,b;x)dx\ll T^{1+\frac{2}{a+b}}H^{-1}\mathcal{L},
\end{equation*}
which holds for $\int_{T}^{T+T_0}G^2(b,a;x)dx$ as well. So we obtain that
 \begin{equation}\label{error1}
 	\int_{T}^{T+T_0}\big(G(a,b;x)+G(b,a;x)\big)^2dx\ll T^{1+\frac{2}{a+b}}H^{-1}\mathcal{L}.
 \end{equation}
	
\subsection{ Mean-square of $\Delta_{a,b}(x)$}\ 

In this subsection, we always take $H=T^{2(a+b)}.$ By \eqref{ir1} and \eqref{error1}, we get
\begin{equation}\label{E2}
	\begin{split}
			&\ \quad\int_{T}^{T+T_0}\big(\mathcal{R}_1(a,b; x)+\mathcal{R}_1(b,a; x)+G(a,b;x)+G(b,a;x)+x^\frac{1}{3(a+b)}H^\varepsilon\big)^2dx\\
		&\ll T^{1+\frac{1}{a+b}}\Big(H^{-\frac{a}{2(a+b)}}\mathcal{L}^2+\exp\big\{-c_4(a,b)(\log T)^\frac{1}{2}(\log\log T)^{-\frac{1}{2}}\big\}+T^{\frac{1}{a+b}}H^{-1}\mathcal{L}+T^{-\frac{1}{3(a+b)}}H^\varepsilon\Big)\\
		&\ll T^{1+\frac{1}{a+b}}\exp\big\{-c_4(a,b)(\log T)^\frac{1}{2}(\log\log T)^{-\frac{1}{2}}\big\}.
	\end{split}
\end{equation}
From \eqref{iD*} and \eqref{error1}, using Cauchy-Schwarz inequality, we have 
\[ \int_{T}^{T+T_0}\Delta^*_{a,b}(x)\big(G(a,b;x)+G(b,a;x)+x^\frac{1}{3(a+b)}H^\varepsilon\big)dx\ll T^{1+\frac{3}{2(a+b)}}H^{-\frac{1}{2}}\mathcal{L}+T^{1+\frac{5}{6(a+b)}}H^\varepsilon.\]
Combining this with \eqref{idr}, we see
\begin{equation}\label{DE}
\begin{split}
	&\quad \int_{T}^{T+T_0}\Delta^*_{a,b}(x)\big(\mathcal{R}_1(a,b; x)+\mathcal{R}_1(b,a; x)+G(a,b;x)+G(b,a;x)+x^\frac{1}{3(a+b)}H^\varepsilon\big)dx\\
	&\ll T^{1+\frac{1}{a+b}}\Big(H^{-\frac{a}{2(a+b)}}\mathcal{L}^2+\exp\big\{-c_4(a,b)(\log T)^\frac{1}{2}(\log\log T)^{-\frac{1}{2}}\big\}+T^{\frac{1}{2(a+b)}}H^{-\frac{1}{2}}\mathcal{L}+T^{-\frac{1}{6(a+b)}}H^\varepsilon\Big)\\
	&\ll T^{1+\frac{1}{a+b}}\exp\big\{-c_4(a,b)(\log T)^\frac{1}{2}(\log\log T)^{-\frac{1}{2}}\big\}.
\end{split}
\end{equation}

 Since
\[ \Delta^{2}_{a,b}(x)=\Delta^{*2}_{a,b}(x;z)+2\Delta^{*2}_{a,b}(x;z)\big(\Delta_{a,b}(x)-\Delta^{*}_{a,b}(x)\big)+\big(\Delta_{a,b}(x)-\Delta^{*}_{a,b}(x)\big)^2,\]
from \eqref{eabr}, \eqref{Dlt}, \eqref{iD*}, \eqref{E2} and \eqref{DE}, by noting that $H=T^{2(a+b)}$, we obtain
\begin{multline*}
	\int_{T}^{T+T_0}\Delta^{2}_{a,b}(x)dx=\frac{c_1^2(a,b)}{2\pi^2}G_{a,b}\int_{T}^{T+T_0}x^\frac{1}{a+b}dx\\
	+O\Big(T^{1+\frac{1}{a+b}}\exp\big\{-c_4(a,b)(\log T)^\frac{1}{2}(\log\log T)^{-\frac{1}{2}}\big\}\Big).
\end{multline*}
Hence, the proof of Theorem \ref{thm:meansquare_i} is completed. 


\section{ Proof of Theorem \ref{thm:meansquare_q} }

In this section,  we consider the case where $1\leq a<b$ are fixed integers such that $(a, b)=1$. 

By applying the same procedure used in the proof of Theorem \ref{thm:meansquare_i}, and substituting Lemma 4.2 (II) for Lemma 4.2 (I), we can derive that
 \begin{multline}\label{iD*q}
	\int_{T}^{T+T_0}\Delta^{*2}_{a,b}(x;z)dx=\frac{c_1^2(a,b)}{2\pi^2}G_{a,b}\int_{T}^{T+T_0}x^\frac{1}{a+b}dx\\
	+O\big(T^{1+\frac{1}{a+b}}H^{-\frac{a}{2(a+b)}}\mathcal{L}^2+T^{1+\frac{1}{a+b}-\frac{a}{b(a+b)(a+b-1)}}\mathcal{L}^4\big),
\end{multline}
\begin{equation}\label{idrq}
	\int_{T}^{T+T_0}\Delta^*_{a,b}(x, H)\big(\mathcal{R}_1(a,b; x)+\mathcal{R}_1(b,a; x)\big)dx\ll T^{1+\frac{1}{a+b}}H^{-\frac{a}{2(a+b)}}\mathcal{L}^2+T^{1+\frac{1}{a+b}-\frac{a}{b(a+b)(a+b-1)}}\mathcal{L}^4,
\end{equation}
\begin{equation}\label{ir1q}
	\int_{T}^{T+T_0}\big(\mathcal{R}_1(a,b; x)+\mathcal{R}_1(b,a; x)\big)^2dx\ll   T^{1+\frac{1}{a+b}}H^{-\frac{a}{2(a+b)}}\mathcal{L}^2+T^{1+\frac{1}{a+b}-\frac{a}{b(a+b)(a+b-1)}}\mathcal{L}^4.
\end{equation}

We still take $H=T^{2(a+b)}$. From \eqref{ir1q} and \eqref{error1}, we see
\begin{equation}\label{E2q}
	\begin{split}
		&\ \quad\int_{T}^{T+T_0}\big(\mathcal{R}_1(a,b; x)+\mathcal{R}_1(b,a; x)+G(a,b;x)+G(b,a;x)+\mathcal{L}^3\big)^2dx\\
		&\ll T^{1+\frac{1}{a+b}}H^{-\frac{a}{2(a+b)}}\mathcal{L}^2+T^{1+\frac{1}{a+b}-\frac{a}{b(a+b)(a+b-1)}}\mathcal{L}^4+T^{1+\frac{2}{a+b}}H^{-1}\mathcal{L}\\
		&\ll T^{1+\frac{1}{a+b}-\frac{a}{b(a+b)(a+b-1)}}\mathcal{L}^4.
	\end{split}
\end{equation}
By \eqref{iD*q}, \eqref{error1} and Cauchy-Schwarz inequality, we get
\[ \int_{T}^{T+T_0}\Delta^*_{a,b}(x)\big(G(a,b;x)+G(b,a;x)+\mathcal{L}^3\big)dx\ll T^{1+\frac{3}{2(a+b)}}H^{-\frac{1}{2}}\mathcal{L}+T^{1+\frac{1}{2(a+b)}}\mathcal{L}^3, \]
which combining with \eqref{idrq} yields
\begin{equation}\label{DEq}
	\begin{split}
		&\quad\  \int_{T}^{T+T_0}\Delta^*_{a,b}(x)\big(\mathcal{R}_1(a,b; x)+\mathcal{R}_1(b,a; x)+G(a,b;x)+G(b,a;x)+\mathcal{L}^3\big)dx\\
		&\ll T^{1+\frac{1}{a+b}}H^{-\frac{a}{2(a+b)}}\mathcal{L}^2+T^{1+\frac{1}{a+b}-\frac{a}{b(a+b)(a+b-1)}}\mathcal{L}^4+ T^{1+\frac{3}{2(a+b)}}H^{-\frac{1}{2}}\mathcal{L}+T^{1+\frac{1}{2(a+b)}}\mathcal{L}^3\\
		&\ll T^{1+\frac{1}{a+b}-\frac{a}{b(a+b)(a+b-1)}}\mathcal{L}^4+T^{1+\frac{1}{2(a+b)}}\mathcal{L}^3\\
		&\ll T^{1+\frac{1}{a+b}-\frac{a}{b(a+b)(a+b-1)}}\mathcal{L}^4,
	\end{split}
\end{equation}
where in the last step we used $\frac{a}{b(a+b-1)}<\frac{1}{2}$ when $b>a\geq 1$ are integers. Using
\[ \Delta^{2}_{a,b}(x)=\Delta^{*2}_{a,b}(x;z)+2\Delta^{*2}_{a,b}(x;z)\big(\Delta_{a,b}(x)-\Delta^{*}_{a,b}(x)\big)+\big(\Delta_{a,b}(x)-\Delta^{*}_{a,b}(x)\big)^2,\]
together with \eqref{eabq}, \eqref{Dlt}, \eqref{iD*q}, \eqref{E2q} and \eqref{DEq}, noting that $H=T^{2(a+b)}$,  we deduce that
\begin{equation*}
	\int_{T}^{T+T_0}\Delta^{2}_{a,b}(x)dx=\frac{c_1^2(a,b)}{2\pi^2}G_{a,b}\int_{T}^{T+T_0}x^\frac{1}{a+b}dx
	+O\big(T^{1+\frac{1}{a+b}-\frac{a}{b(a+b)(a+b-1)}}\mathcal{L}^4\big),
\end{equation*}
which completes the proof of Theorem \ref{thm:meansquare_q}.

\section{Proofs of  Corollary \ref{sign_i} and Corollary \ref{sign_q}}

Now we prove Corollary \ref{sign_i} and  \ref{sign_q} by contradiction. 

Let $T>10$ be a large parameter and $T^ {1-\frac{1}{2 (a+ b)}}\ll T_0\leq T$. Assume that $\Delta_{a,b}(x)$ does not change sign in the interval $[T, T+T_0]$. Then, by \eqref{upbd} and \eqref{meanv}, we see
\begin{equation}\label{Fupp}
	\int_{T}^{T+T_0}\Delta^{2}_{a,b}(x)dx\ll \max_{T \leq x \leq T+T_0}\big|\Delta_{a,b}(x)\big|\cdot \Big|\int_{T}^{T+T_0}\Delta_{a,b}(x)dx\Big|=o\big(T_0 T^{\frac{1}{a+b}}\big)
\end{equation}
holds for any positive real numbers $a\neq b$.

%

 Select $T_0$ as  
 $$T_0=c_5(a,b)\exp\big\{-c_4(a,b)(\log T)^\frac{1}{2}(\log\log T)^{-\frac{1}{2}}\big\}$$
  if $\dfrac{a}{b}\in \mathbb{\overline{Q}}\setminus \mathbb{Q}$; and as $$T_0=c_6(a,b)T^{1-\frac{a}{b(a+b)(a+b-1)}}\mathcal{L}^4$$
  if $1\leq a<b$ are integers with $(a, b)=1$, where $c_5(a,b)$, $c_6(a,b)$ are sufficiently large constants. By Theorem \ref{thm:meansquare_i} and Theorem \ref{thm:meansquare_q}, we obtain, in both cases,
 \begin{equation*}
 	\int_{T}^{T+T_0}\Delta^{2}_{a,b}(x)dx\gg T_0T^{\frac{1}{a+b}}.
 \end{equation*}
This leads to a contradiction with \eqref{Fupp}.  Thus we prove Corollary \ref{sign_i} and \ref{sign_q}.

\bibliography{jia}
\bibliographystyle{abbrv}

\end{document}